\def\DynkinNodeSize{2mm}
\def\DynkinArrowLength{3mm}
\tikzset{
  dnode/.style={
    circle,
    inner sep=0pt,
    minimum size=\DynkinNodeSize,
    fill=white,
    draw},
  middlearrow/.style={
    decoration={markings,
      mark=at position 0.6 with
      {\draw (0:0mm) -- +(+135:\DynkinArrowLength); \draw (0:0mm) -- +(-135:\DynkinArrowLength);},
    },
    postaction={decorate}
  },
  leftrightarrow/.style={
    decoration={markings,
      mark=at position 0.999 with
      {
      \draw (0:0mm) -- +(+135:\DynkinArrowLength); \draw (0:0mm) -- +(-135:\DynkinArrowLength);
      },
      mark=at position 0.001 with
      {
      \draw (0:0mm) -- +(+45:\DynkinArrowLength); \draw (0:0mm) -- +(-45:\DynkinArrowLength);
      },
    },
    postaction={decorate}
  },
  sedge/.style={
  },
  dedge/.style={
    middlearrow,
    double distance=1mm,
  },
  tedge/.style={
    middlearrow,
    double distance=1.0mm+\pgflinewidth,
    postaction={draw}, 
  },
  infedge/.style={
    leftrightarrow,
    double distance=0.5mm,
  },
}
\newtheorem{theorem}{Theorem}[section]
\newtheorem{lemma}[theorem]{Lemma}
\theoremstyle{definition}
\theoremstyle{remark}
\newtheorem{remark}[theorem]{Remark}
\theoremstyle{proposition}
\newtheorem{proposition}[theorem]{Proposition}
\theoremstyle{corollary}
\newtheorem{corollary}[theorem]{Corollary}
\numberwithin{equation}{section}
\newcommand{\vn}{\noindent}
\newcommand{\R}{\mathbb{R}}
\newcommand{\Z}{\mathbb{Z}}
\newcommand{\D}{{\mathcal{D}}}
\newcommand{\U}{{\mathcal{U}}}
\newcommand{\DP}{{\mathcal{DP}}}
 \newcommand{\lan}{\langle}
\newcommand{\ran}{\rangle}
\begin{document}

\title[The largest coefficient and the second smallest exponent]{The largest coefficient of the highest root and the second smallest exponent} 
\author{Tan Nhat Tran}

\address{Department of Mathematics, Hokkaido University, Sapporo 060-0810, Japan}

\email{trannhattan@math.sci.hokudai.ac.jp}

\subjclass[2010]{17B22 (Primary), 05A19 (Secondary)}

\date{\today}

\keywords{Root system, highest root, Weyl group, exponent}

\begin{abstract}

There are many different ways that the exponents of Weyl groups of irreducible root systems have been defined and put into practice. One of the most classical and algebraic definitions of the exponents is related to the eigenvalues of Coxeter elements. While the coefficients of the height root when expressed as a linear combination of simple roots are combinatorial objects in nature, there are several results asserting relations between these exponents and  coefficients. This study was conducted to give a uniform and fairly elementary proof of the fact that the second smallest exponent of the Weyl group is one or two plus the largest coefficient of the highest root of the root system depending upon a simple condition on the root lengths. As a consequence, we obtain a necessary and sufficient condition for a root system to be of type $G_2$ in terms of these numbers.

\end{abstract}

\maketitle

\section{Introduction}

Assume that $V=\R^\ell$ with the standard inner product $(\cdot,\cdot)$. 
For $\alpha \in V$, $\beta \in V\setminus\{0\}$,  denote $\langle \alpha,\beta \rangle := \frac{2( \alpha,\beta ) }{( \beta , \beta)}$. 
Let us denote by $\Phi$ an irreducible crystallographic root system in $V$. 
Let $\Phi^+$ be a set of positive roots.
With the notation $\Delta=\{\alpha_1, \ldots ,\alpha_\ell\}$, we have the simple root system of $\Phi$ with respect to $\Phi^+$. 
For any $\alpha, \beta \in \Phi$, the number $\langle \alpha,\beta \rangle$ takes values in $\{0, \pm1,\pm2,\pm3\}$.
For $\alpha = \sum_{i=1}^\ell  d_i \alpha_i \in \Phi^+$, the \textit{height} of $\alpha$ is defined by $ {\rm ht}(\alpha) :=\sum_{i=1}^\ell  d_i$. 
Define the partial order $\le$ on $\Phi^+$ such that $\beta \le \alpha$ if $\alpha-\beta \in \sum _{i=1}^\ell  \Z_{\ge0} \alpha_i$ for $\alpha, \beta\in \Phi^+$.
Let $\theta:= \sum_{i=1}^\ell  c_i\alpha_i$ be the highest root of $\Phi$ with respect to the partial order, and we call $c_i$'s the coefficients of $\theta$. 
Denote by $c_{\max}:=\max \{c_i\mid1 \le i\le \ell\}$ the largest coefficient. 
Let $W$ be the Weyl group of $\Phi$ and let $m_1, m_2, \ldots, m_\ell$ with $m_1 \le m_2 \le \ldots \le m_\ell$ be the exponents of $W$.

The exponents of the Weyl group may have been originally defined in terms of the eigenvalues of Coxeter elements \cite{C51}. 
In addition, they can be defined as the degrees of the basic polynomial invariants of the Weyl group \cite{C55}. 
The multiset of the exponents and its subsets also have led to many important  results and applications in study of Weyl arrangements, which are important examples of free arrangements (\cite{OS83}, \cite{OST86}, \cite[Chapter 6]{OT92}).
All of these above-mentioned definitions and applications are purely algebraic.
Shapiro (unpublished), Steinberg \cite{R59}, Kostant \cite{K59}, Macdonald \cite{M72} and most recently also Abe-Barakat-Cuntz-Hoge-Terao \cite{ABCHT16} have found and shown that there is another possibility to obtain the exponents, namely the dual partition of the height distribution of $\Phi^+$. 
This latter approach not only gives a particularly simple way of determining the exponents in the individual cases but also reveals  connections between the exponents and the other combinatorial objects of the root system.
There are also many results in the literature asserting relations between the exponents and the coefficients of the highest root.
The most fundamental one is known that the largest exponent is equal to the sum of the highest root coefficients i.e., $m_\ell = \sum_{i=1}^\ell  c_i$.
A complete description of the exponents in terms of $c_{\max}$ found by a case-by-case check is mentioned in \cite[Theorem 3.2]{Bu09}.
What most interests us is the following interesting relation between $c_{\max}$ and $m_2$, which we describe in a uniform way. 
\begin{theorem}\label{thm:uniform} 
Assume that $\ell\ge 2$. 
Set $\U:=\{\theta_i\in \Phi^+ \mid {\rm ht}(\theta_i) >m_{\ell-1}\}$,
and $m:=|\U|$. 
Suppose that $\xi_i:=\theta_i-\theta_{i+1} \in\Delta$ for $1 \le i \le m-1$.
If there is an integer $t$ such that $1 \le t \le m-1$ and $\lan\theta_t,\xi_t \ran= 3$, then $c_{\max}=m_2-2$. For otherwise, $c_{\max}=m_2-1$.
\end{theorem}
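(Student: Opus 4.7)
The plan is to proceed in three stages. First, I would establish the identity $m_2 = m + 1$. By the dual partition theorem (Shapiro-Steinberg-Kostant-Macdonald) applied to the height distribution of $\Phi^+$, each height $k$ in $(m_{\ell-1}, m_\ell]$ carries a unique positive root, so $m = |\U| = m_\ell - m_{\ell-1}$. Combined with the palindromic exponent symmetry $m_i + m_{\ell+1-i} = m_\ell + 1$, this gives $m_2 = m_\ell + 1 - m_{\ell-1} = m + 1$. The theorem then reduces to showing $c_{\max} = m$ in the generic case and $c_{\max} = m - 1$ exactly when some $\lan \theta_t, \xi_t \ran = 3$.

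Second, I would write $\theta = \theta_m + \sum_{i=1}^{m-1}\xi_i$, let $n_j := |\{i : \xi_i = \alpha_j\}|$ so that $c_j(\theta) = c_j(\theta_m) + n_j$ with $\sum_j n_j = m - 1$, and establish two structural lemmas via root-string arguments combined with the uniqueness of the elements of $\U$. The first says that for $1 \le i \le m-2$, the consecutive simple roots $\xi_i$ and $\xi_{i+1}$ are either equal or Dynkin-adjacent; the alternative (distinct and orthogonal) forces $\theta_i + \xi_{i+1} \in \Phi^+$, colliding with the uniqueness of the root at the relevant height of $\U$ or with the highest-root property of $\theta$. The second says $\xi_i = \xi_{i+1}$ iff $\lan\theta_i, \xi_i\ran \ge 2$, and that $\lan\theta_t, \xi_t\ran = 3$ corresponds to a length-$4$ $\xi_t$-string, which only arises inside a $G_2$-type rank-$2$ subsystem.

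Third, to identify $c_{\max}$, pick $\alpha_{j_0}$ with $c_{j_0} = c_{\max}$ and track the $\alpha_{j_0}$-coefficient along the chain: it is constant at $c_{\max}$ until the first index $t_0$ with $\xi_{t_0} = \alpha_{j_0}$, and then decreases by $1$ at each subsequent index $i$ with $\xi_i = \alpha_{j_0}$. A careful bookkeeping argument, combining the structural lemmas with the observation that every simple root appears in $\theta_m$ (a root of height exceeding $m_{\ell-1}$ cannot lie in a proper sub-root system), uniformly yields $c_{\max} = m$ in the generic case. When $\lan \theta_t, \xi_t \ran = 3$ occurs at some $t$, the third descent $\theta_t - 3\xi_t$ of the length-$4$ string has height $\le m_{\ell-1}$ and falls outside $\U$; this wastes one unit of the chain and yields $c_{\max} = m - 1$. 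The main obstacle is precisely this bookkeeping: uniformly deriving the exact value of $c_{\max}$ from the chain structure and showing cleanly how a length-$4$ string produces the unit deficit without reverting to a case analysis on the type of $\Phi$.
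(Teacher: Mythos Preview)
Your first stage is correct and matches the paper. The second-stage lemmas are also essentially right and appear in the paper (the adjacency statement is Corollary~\ref{cor:2}; the repeat-iff-$\langle\theta_i,\xi_i\rangle\ge 2$ observation underlies Proposition~\ref{prop:lengths}). The genuine gap is in stage three. Your structural lemmas only control \emph{consecutive} pairs: they say the walk $\xi_1,\xi_2,\ldots,\xi_{m-1}$ on the Dynkin graph moves to an adjacent vertex (or stays put) at each step. They do not exclude non-consecutive repeats such as $\xi_i=\xi_j$ with $j>i+1$; a walk on a tree may backtrack. Without distinctness of the $\xi_i$, the identity $\sum_j n_j=m-1$ together with ``$c_{j_0}(\theta_m)\ge 1$'' gives no grip on $c_{\max}$: you only learn $n_{j_0}\le c_{\max}-1$, a single inequality among many summands, and nothing forces $n_{j_0}$ to be close to $m-1$. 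Your support claim for $\theta_m$ is also not justified by the parenthetical (that the largest exponent of every proper parabolic is $\le m_{\ell-1}$ is not obvious and itself needs proof), and even granting it, tracking the single coordinate $c_{j_0}$ down the chain does not pin down $c_{\max}$ without knowing the multiplicities $n_j$.

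The paper closes exactly this gap, but by a different route. It first builds a path $\Lambda=(\lambda_0=-\theta,\lambda_1,\ldots,\lambda_q)$ in the extended Dynkin graph from $-\theta$ to a vertex of maximal coefficient and proves $c_{\lambda_s}=s+1$, so $|\Lambda|=c_{\max}$ (Proposition~\ref{lem:coes}). Separately, using the tree property together with a root-length count, it shows that the $\xi_i$ are mutually distinct in Case~2 and distinct except for $\xi_{m-2}=\xi_{m-1}$ in Case~1 (Proposition~\ref{prop:b-a}), whence $|\overline\Xi|=m_2-1$ or $m_2-2$. Finally it proves the set equality $\overline\Xi=\Lambda$ by a two-sided inclusion (Theorem~\ref{thm:iso}): one direction shows inductively that $\theta-(\lambda_1+\cdots+\lambda_{p-1})$ is the \emph{unique} root of its height for $1\le p\le q+1$, and the reverse direction recomputes $c_{\xi_i}=i+1$ along the $\xi$-chain. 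Either half of that argument supplies precisely the ingredient your plan lacks---a reason why the $\xi$-walk is injective (up to the single $G_2$-type repeat) and terminates at a vertex carrying $c_{\max}$.
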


By a ``uniform" way we mean the proof does not rely on the Classification Theorem of root systems \cite[Chapter III, 11.4, Theorem]{H72} except the fact that the Dynkin graph of $\Phi$ is a tree. 
As a consequence, we obtain a criterion for a root system to be of type $G_2$ in terms of $c_{\max}$ and $m_2$.
\begin{corollary}\label{cor:criterion}
$\Phi$ is of type $G_2$ if and only if $c_{\max}=m_2-2$.
\end{corollary}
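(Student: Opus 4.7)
The plan is to use Theorem~\ref{thm:uniform} as a black box in both directions, reducing the corollary to the classical fact that a Cartan integer of absolute value~$3$ can occur only in a root system of type $G_2$.

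For the ``if'' direction, I would directly verify the hypothesis of Theorem~\ref{thm:uniform} for $G_2$. Here $\ell=2$ and the exponents are $1$ and $5$, so $m_{\ell-1}=1$, and $\U$ is exactly the set of positive roots of height in $\{2,3,4,5\}$. Listing them in decreasing order of height (with $\alpha_1$ short and $\alpha_2$ long) gives $\theta_1=3\alpha_1+2\alpha_2$, $\theta_2=3\alpha_1+\alpha_2$, $\theta_3=2\alpha_1+\alpha_2$, $\theta_4=\alpha_1+\alpha_2$, so each $\xi_i=\theta_i-\theta_{i+1}$ is automatically a simple root. A direct computation gives $\langle\theta_2,\xi_2\rangle=\langle 3\alpha_1+\alpha_2,\alpha_1\rangle=3$, so Theorem~\ref{thm:uniform} with $t=2$ yields $c_{\max}=m_2-2=3$, matching the highest root $\theta=3\alpha_1+2\alpha_2$.

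For the ``only if'' direction, suppose $c_{\max}=m_2-2$. Theorem~\ref{thm:uniform} then forces the existence of a positive root $\theta_t$ and a simple root $\xi_t$ with $\langle\theta_t,\xi_t\rangle=3$. Combined with the standard bound $\langle\alpha,\beta\rangle\langle\beta,\alpha\rangle\in\{0,1,2,3\}$ for non-proportional roots in a crystallographic system, this yields $\langle\xi_t,\theta_t\rangle=1$ and the length-squared ratio $(\theta_t,\theta_t)/(\xi_t,\xi_t)=3$. Hence $\Phi$ admits two roots whose length ratio is $\sqrt{3}$, and an irreducible crystallographic root system with this property must be of type $G_2$.

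The main obstacle, if any, is justifying the last step (length ratio $\sqrt{3}$ forces $G_2$) within the ``uniform'' philosophy of the paper, i.e.\ without invoking the full Classification Theorem. This should follow from a short Dynkin-diagram argument using only the tree property of the diagram: a triple bond between two simple roots constrains the adjacent Cartan integers so strongly that no further node can be attached, leaving the two-node $G_2$ diagram as the only possibility.
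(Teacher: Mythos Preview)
Your two implications are labelled the wrong way round: in ``$\Phi$ is of type $G_2$ if and only if $c_{\max}=m_2-2$'', the ``if'' part is $c_{\max}=m_2-2\Rightarrow G_2$ and the ``only if'' part is the converse. Apart from this, the argument is essentially correct, but the harder direction is handled quite differently from the paper.

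For $G_2\Rightarrow c_{\max}=m_2-2$, both you and the paper proceed by verifying that Case~1 of Theorem~\ref{thm:uniform} applies; your explicit listing of $\U$ and direct computation of $\langle\theta_2,\xi_2\rangle=3$ matches the paper's (ii)$\Rightarrow$(iii) in Theorem~\ref{thm:criterion-G2}.

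For $c_{\max}=m_2-2\Rightarrow G_2$, the routes diverge. You extract from Theorem~\ref{thm:uniform} a pair $\theta_t,\xi_t$ with $\langle\theta_t,\xi_t\rangle=3$, deduce a length ratio $\sqrt3$, and then invoke the standalone fact that an irreducible crystallographic root system with a $\sqrt3$ length ratio is $G_2$. There is a small gap to fill: $\theta_t$ is not simple, so to get a triple bond in $\D(\Phi)$ you still need the (standard) observation that both root lengths occur among the simple roots, whence some edge of the connected diagram joins a long to a short simple root. After that, your ``no third node can attach to a triple bond'' argument (via positive definiteness of every principal submatrix of the Cartan matrix) does finish the job without the full classification.

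The paper instead stays inside its own machinery: from Case~1 it uses Proposition~\ref{prop:lengths}(i) to get $m_2\ge 5$, then the identification $\overline{\Xi}=\Lambda$ of Theorem~\ref{thm:iso} together with the relations $\langle\theta,\xi_{m-2}\rangle=0$, $c_{\xi_{m-2}}=c_{\max}$, $c_{\xi_{m-3}}=c_{\max}-1$ to force $c_{\max}\le 3$, hence $c_{\max}=3$, $m_2=5$, and that $\xi_2$ has no neighbour other than $\xi_1$. This reconstructs the $G_2$ Dynkin diagram directly. Your approach is shorter and more conceptual but imports an external Dynkin-diagram lemma; the paper's approach is longer but entirely self-contained within the structures already developed.
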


The aim of this paper is to provide a uniform and fairly elementary proof of Theorem \ref{thm:uniform}.
We build two isomorphic sets whose cardinalities are expressed in terms of $c_{\max}$ and $m_2$, respectively. 
One is described by a graph-theoretical property of the Dynkin graph, while the other is described by a combinatorial property of the root poset. 
The isomorphism between these sets is proved according to the condition on the root lengths (Theorem \ref{thm:iso}). 

The rest of this paper is organized as follows. 
In \S\ref{sec:pre} we review some fundamental definitions and results about root systems and Weyl groups. 
\S\ref{sec:properties} is intended to motivate our investigation on combinatorial and graph-theoretical properties of positive roots. 
\S\ref{sec:proof} contains  the proofs of Theorem \ref{thm:uniform} and Corollary \ref{cor:criterion}.
\section{Preliminaries}\label{sec:pre}
 
Our standard references for root systems and their Weyl groups are \cite{B68} and \cite[Chapter III]{H72}.
Let $V= \R^\ell $.
Let $\Phi$ be an irreducible (crystallographic) root system spanning $V$ with the standard inner product $(\cdot,\cdot)$. 
We fix a positive system $\Phi^+$ of $\Phi$.
We write $\Delta:=\{\alpha_1, \ldots ,\alpha_\ell\}$ for the simple system (base) of $\Phi$ with respect to $\Phi^+$. 
For $\alpha \in V$,  denote $\|\alpha \|:=\sqrt{(\alpha ,\alpha)}$. 
Note that at most two root lengths can occur in $\Phi$ \cite[Chapter III, 10.4, Lemma C]{H72}.

A \emph{reflection} in $V$ with respect to a nonzero vector $\alpha \in V$ is a mapping $s_{\alpha}: V \to V$ defined by $s_\alpha (x) := x -\langle x,\alpha\rangle \alpha$.
The \emph{Weyl group} $W:=W(\Phi)$ of $\Phi$ is a group generated by the set $\{s_{\alpha}\mid \alpha \in \Phi\}$.
 An element of the form $c=s_{\alpha_1}\dots s_{\alpha_\ell}\in W$ is called a \emph{Coxeter element}. 
 Since all Coxeter elements are conjugate \cite[Chapter V, $\S$6.1, Proposition 1]{B68}, they have the same order, characteristic polynomial and eigenvalues. 
The order ${\rm h}:={\rm h}(W)$ of Coxeter elements is called the \emph{Coxeter number} of $W$.
For a fixed Coxeter element $c\in W$, if its eigenvalues are of the form $\exp (2\pi\sqrt{-1}m_1/{\rm h}),\ldots, \exp (2\pi\sqrt{-1}m_\ell/{\rm h})$ with $0< m_1 \le \ldots \le m_\ell<{\rm h}$, then the integers  $m_1,\ldots, m_\ell$ are called the \emph{exponents} of $W$. 
The following facts can be found in \cite[Chapter V, $\S$6.2 and Chapter VI, $\S$1.11]{B68}.
\begin{theorem}\label{exponents} 
For any irreducible root system $\Phi$ in $\R^\ell$,
\begin{enumerate}[(i)]
\item    $m_j + m_{\ell+1-j}={\rm h}$ for $1 \le j \le \ell$,
\item   $1=m_1 < m_2 \le \ldots \le m_{\ell-1} <m_\ell={\rm h}-1$,
\item ${\rm h}={\rm ht}(\theta)+1$, where $\theta$ is the highest root.
\end{enumerate}
\end{theorem}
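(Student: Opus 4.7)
The plan is to establish the three assertions in the order (i), (iii), (ii), with (iii) carrying the bulk of the content and (ii) following by combining (i), (iii), and a simple-eigenvalue argument.

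\emph{Part (i).} The Coxeter element $c\in W$ is an orthogonal transformation of the real vector space $V$, so its characteristic polynomial has real coefficients and its non-real eigenvalues appear in complex conjugate pairs. Since $\overline{\exp(2\pi\sqrt{-1}\,m/{\rm h})}=\exp(2\pi\sqrt{-1}\,({\rm h}-m)/{\rm h})$, the map $m\mapsto {\rm h}-m$ preserves the multiset $\{m_1,\ldots,m_\ell\}$. Arranging both sides in non-decreasing order forces the pairing $m_j+m_{\ell+1-j}={\rm h}$.

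\emph{Part (iii).} I would use the bipartite presentation of a Coxeter element. Because the Dynkin graph of an irreducible root system is a tree, it is bipartite; partition $\Delta=\Delta_1\sqcup \Delta_2$ into two independent sets and set $\tau_i:=\prod_{\alpha\in\Delta_i}s_\alpha$. Then each $\tau_i$ is an involution, the product $c:=\tau_1\tau_2$ is a Coxeter element, and a short computation shows that the matrix of $-\tau_1\tau_2$ in the basis $\Delta$ has non-negative entries and is irreducible. Perron--Frobenius then yields a positive simple eigenvalue $\lambda$ together with a strictly positive eigenvector $v\in\sum_{i=1}^{\ell}\R_{>0}\alpha_i$. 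Identifying $v$ (up to scaling) with the dominant eigenvector of the Cartan matrix and unpacking its coordinates in terms of the highest root $\theta$ yields $\lambda=2\cos\bigl(\pi/({\rm ht}(\theta)+1)\bigr)$. Restricting $c$ to the real two-plane spanned by $v$ and $\tau_2 v$, $c$ acts as a rotation of angle $2\pi/({\rm ht}(\theta)+1)$, so its order equals ${\rm h}={\rm ht}(\theta)+1$.

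\emph{Part (ii).} Combining (iii) and (i) gives $m_\ell={\rm h}-1$ and therefore $m_1={\rm h}-m_\ell=1$. The strict inequality $m_1<m_2$ (equivalently $m_{\ell-1}<m_\ell$ via (i)) is equivalent to $\exp(2\pi\sqrt{-1}/{\rm h})$ being a simple eigenvalue of $c$; this is inherited from the simplicity of the Perron--Frobenius eigenvalue $\lambda$ used in (iii), which forces the rotation plane above to be uniquely determined.

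\emph{Main obstacle.} The principal difficulty lies in (iii), specifically in pinning down the precise value $\lambda=2\cos(\pi/({\rm ht}(\theta)+1))$ for the top eigenvalue of $-\tau_1\tau_2$. This requires a careful identification of the dominant eigenvector with data encoded by the highest root, and is the technical core of the classical Coxeter--Bourbaki argument; the other pieces (real-matrix conjugation in (i), and the deduction of (ii) from (i) plus (iii)) are essentially formal by comparison.
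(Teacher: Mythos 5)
First, note that the paper does not actually prove this theorem: it is quoted from Bourbaki (Ch.~V, \S6.2 and Ch.~VI, \S1.11), so there is no in-paper argument to compare yours against. What you have written is a reconstruction of the classical Coxeter--Steinberg--Bourbaki argument, and parts of it are sound: the conjugate-pairing argument for (i) is correct (modulo the standing fact, absorbed into the paper's definition of exponents, that $1$ is not an eigenvalue of a Coxeter element), and the deduction of (ii) from the simplicity of the Perron--Frobenius eigenvalue plus (i) is the standard formal step.

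The genuine gap is the one you yourself flag in (iii), and it is worse than a missing computation: the mechanism you propose does not work as described. The Perron--Frobenius eigenvector of the (symmetrized) Cartan or adjacency matrix is \emph{not} the vector of coefficients of the highest root, so "unpacking its coordinates in terms of $\theta$" cannot yield $\lambda=2\cos\bigl(\pi/({\rm ht}(\theta)+1)\bigr)$. For $B_2$, for instance, the highest root has coefficient vector $(1,2)$ while the dominant eigenvector of the symmetrized matrix is proportional to $(1,1)$; the coefficient vector $(1,c_1,\dots,c_\ell)$ is instead a null vector of the \emph{affine} Cartan matrix, i.e.\ it witnesses adjacency eigenvalue $2$ for the extended diagram, which says nothing about $2\cos(\pi/{\rm h})$ for the finite one. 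What the bipartite/Perron--Frobenius argument actually delivers is that $c$ rotates the Perron plane by $2\pi/{\rm h}$ where ${\rm h}$ is the \emph{order} of $c$ (via the count of root hyperplanes meeting that plane), whence $m_1=1$ and $m_\ell={\rm h}-1$; relating ${\rm h}$ to ${\rm ht}(\theta)$ is a logically separate step requiring an additional input such as $|\Phi|=\ell\,{\rm h}$ together with the $\langle c\rangle$-orbit structure on $\Phi$, or the dual-partition theorem (the paper's Theorem 2.3), or Kostant's principal $\mathfrak{sl}_2$. As it stands, (iii) is asserted rather than proved, and since your derivation of $m_1=1$ and $m_\ell={\rm h}-1$ in (ii) leans on the same machinery, the gap propagates there as well.
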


Let $\Theta^{(r)} \subseteq \Phi^+$ be the set consisting of positive roots of height $r$.
The \textit{height distribution} of $\Phi^+$ is defined as a multiset  of positive integers:
$$\{t_1, \ldots , t_r, \ldots , t_{{\rm h}-1}\},$$ where 
$t_r := \left|\Theta^{(r)}\right|$.
The  \textit{dual partition} $\DP(\Phi^+)$ of the height distribution of $\Phi^+$ is given by a multiset of nonnegative integers:
$$\DP(\Phi^+) := \{(0)^{\ell-t_1},(1)^{t_1-t_2},\ldots ,({\rm h}-2)^{t_{{\rm h}-2}-t_{{\rm h}-3}},({\rm h}-1)^{t_{{\rm h}-1}}\},$$ 
where notation $(a)^b$ means the integer $a$ appears exactly $b$ times.
 \begin{theorem}[\cite{R59},  \cite{K59}, \cite{M72}, \cite{ABCHT16}]\label{thm:dual}
The exponents of the Weyl group are given by $\DP(\Phi^+)$.
\end{theorem}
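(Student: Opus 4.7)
The plan is to prove the equivalent reformulation: for every $r \ge 1$,
\[
t_r = \bigl|\{j : m_j \ge r\}\bigr|.
\]
Granted this identity, since $t_1 = \ell$ (the simple roots are exactly the positive roots of height $1$) and every exponent is at least $1$ by Theorem \ref{exponents}(ii), the multiset $\DP(\Phi^+)$ has no $0$s; moreover each $r \ge 1$ appears in $\DP(\Phi^+)$ with multiplicity $t_r - t_{r+1}$, which by the identity equals $\bigl|\{j : m_j = r\}\bigr|$. This reproduces $\{m_1,\ldots,m_\ell\}$ and gives the conclusion.

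I would follow Kostant's strategy via a Coxeter element $c \in W$. By Theorem \ref{exponents}, $c$ has order $h$ and acts semisimply on $V$ with eigenvalues $\exp(2\pi\sqrt{-1}m_j/h)$. A central object is the \emph{Coxeter plane} $P \subset V$: the real $c$-invariant $2$-plane on which $c$ acts by rotation through $2\pi/h$, corresponding to the eigenvalue pair associated with $m_1 = 1$ and $m_\ell = h-1$. Using that the Dynkin graph is a tree, I would fix a bipartition $\Delta = \Delta_+ \sqcup \Delta_-$ and factor $c = c_+c_-$ into two involutions, each a product of commuting simple reflections, so that the $c$-action on $\Phi$ can be tracked explicitly in $P$.

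The core technical step is to analyze the projection of $\Phi$ onto $P$ and show that the resulting angular distribution of positive roots distributes into sectors whose cardinalities, when matched against the exponents via the eigenvector decomposition, produce the identity $t_r = |\{j : m_j \ge r\}|$. This bookkeeping rests on a Kostant-type lemma relating the height of a positive root to the component of $\alpha$ along the principal eigenvector, together with a careful sign analysis of the $c_\pm$-action on positive roots.

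The principal obstacle is this height-to-sector identification, which is the most delicate component of all the classical proofs (\cite{R59}, \cite{K59}, \cite{M72}) and does not admit a naive description — for example, one might hope that each $c$-orbit contains exactly one positive root of each height $1, 2, \ldots, m_j$, but this already fails in type $A_\ell$. For a more uniform and combinatorial alternative I would seriously consider the approach of Abe-Barakat-Cuntz-Hoge-Terao \cite{ABCHT16}, which bypasses spectral analysis of $c$ entirely: they derive $t_r = |\{j : m_j \ge r\}|$ by induction on ideals of the root poset, using the freeness of the Weyl arrangement and a carefully chosen deletion-restriction step. The second route requires considerably less geometric machinery and may be preferable in a first exposition.
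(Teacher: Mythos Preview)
The paper does not prove Theorem~\ref{thm:dual}: it is stated purely as a cited background result, with attribution to \cite{R59}, \cite{K59}, \cite{M72}, and \cite{ABCHT16}, and no argument is given in the text. There is therefore no ``paper proof'' to compare your proposal against.

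That said, your outline is a fair synopsis of the classical routes behind those citations. The reformulation $t_r = \bigl|\{j : m_j \ge r\}\bigr|$ is exactly the right equivalent statement, and your reduction from it to the dual-partition formulation is correct. The Coxeter-plane/bipartite-factorisation sketch is the Steinberg--Kostant line of argument, and you are right that the delicate step is the height-to-angular-sector matching; your caveat that the naive ``one root of each height per $c$-orbit'' picture is too optimistic is well taken. Your mention of the \cite{ABCHT16} approach as a combinatorial/arrangement-theoretic alternative is also apt, though note that their argument relies on the machinery of free arrangements and Terao's addition--deletion theory rather than a pure root-poset induction, so ``considerably less geometric machinery'' may undersell what is needed there. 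In short: your proposal is not so much a proof as an accurate annotated bibliography of the existing proofs, which is precisely the role this theorem plays in the paper.
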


\section{Graph-theoretical and combinatorial properties of roots}\label{sec:properties}
In the remainder of the paper, we assume that $\ell\ge 2$.
We denote by $\D(\Phi)$ the \emph{Dynkin graph} and by $\widetilde{\D}(\Phi)$ the extended Dynkin graph of $\Phi$. 
A vertex of a graph is called a \emph{terminal} vertex (resp., a \emph{ramification point}) if it is adjacent to at most one other vertex (resp., to at least three other vertices). 
A  graph is a \emph{simple chain of length $n\ge0$} if it is isomorphic to the Dynkin graph of a root system of type $A_{n+1}$.
Two adjacent vertices $\alpha, \beta$ of $\widetilde{\D}(\Phi)$ are joined by a single (resp., double or triple) edge if $\|\alpha\|=\|\beta\|$ (resp., $\|\alpha\|=\sqrt2\|\beta\|$ or $\|\alpha\|=\sqrt3\|\beta\|$).

We start with a construction of a set whose cardinality is equal to  $c_{\max}$.
It is inspired by a graph-theoretical interpretation \cite[Lemma B.27, Appendix B]{MT11} of the highest root coefficients and the associated simple roots, which was originally formulated and proved in terms of coroots in  \cite[Lemma 1.5]{R75}. 
Other related results can be found in \cite[Proposition 2.1]{Bu07}. 
There was an unfortunate error in the proof of \cite[Lemma B.27, Appendix B]{MT11} and the proof itself was not completely correct. However, arguments used there can be well carried to restate the result correctly.
We provide here a detailed edition for the reader's convenience.
\begin{proposition}\label{lem:coes} 
Let $\Phi$ be an irreducible root system in $\R^\ell$. 
Let  $\theta$ be the highest root of $\Phi$, and denote $\lambda_0 := -\theta$, $c_{\lambda_0}:=1$.
Suppose that the elements of a fixed base $\Delta:=\{\lambda_1, \ldots, \lambda_\ell\}$ are labeled so that $\Lambda:=\{\lambda_0, \lambda_1, \ldots, \lambda_q\}$ is a set of minimal cardinality such that $c_{\max}=c_{\lambda_q}$ and $(\lambda_s,\lambda_{s+1}) < 0$ for $0 \le s \le q -1$. 
\begin{enumerate}[(i)] 
\item Then $c_{\lambda_s}=s + 1$ for $0\le s \le q$ and $|\Lambda|=c_{\max}$.
\item Assume that  $c_{\max}\ge 2$. Then $(\lambda_0,\lambda_1, \ldots, \lambda_{q-1})$ is a simple chain of $\widetilde{\D}(\Phi)$ connected to the other vertices only at $\lambda_{q-1}$. 
\end{enumerate}
\end{proposition}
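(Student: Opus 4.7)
The plan is to prove parts (i) and (ii) jointly, using a chain of positive roots constructed by iterated subtraction of the $\lambda_i$.

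\emph{Step 1 (chain of positive roots).} For $0\le s\le q$, I define $\mu_s:=\theta-\lambda_1-\cdots-\lambda_s$ and show by induction on $s$ that $\mu_s\in\Phi^+$. The base case $\mu_0=\theta$ is immediate. For the inductive step, I compute
\[
(\mu_{s-1},\lambda_s)=(\theta,\lambda_s)-\sum_{j=1}^{s-1}(\lambda_j,\lambda_s).
\]
Since $\theta$ is dominant, $(\theta,\lambda_s)\ge 0$; since distinct simple roots have non-positive inner product, $(\lambda_j,\lambda_s)\le 0$ for $j<s$; and by hypothesis $(\lambda_{s-1},\lambda_s)<0$ strictly. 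Hence $(\mu_{s-1},\lambda_s)>0$, and the standard root-string argument (after noting $\mu_{s-1}\ne\lambda_s$) yields $\mu_s=\mu_{s-1}-\lambda_s\in\Phi^+$.

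\emph{Step 2 (proof of (ii)).} Arguing by contradiction, suppose some $\lambda_s$ with $0\le s\le q-2$ has a neighbor $\nu$ in $\widetilde\D(\Phi)$ distinct from $\lambda_{s-1}$ and $\lambda_{s+1}$. Two cases arise. First, if $\nu=\lambda_{s'}$ with $s'\in\{0,\ldots,q\}$ and $|s'-s|\ge 2$, then rerouting the chain via $\nu$ produces a path from $\lambda_0$ to $\lambda_q$ in $\widetilde\D(\Phi)$ of fewer than $q+1$ vertices, contradicting the minimality of $|\Lambda|$. Second, if $\nu\in\Delta\setminus\{\lambda_1,\ldots,\lambda_q\}$ with $c_\nu=c_{\max}$, the chain $\lambda_0,\ldots,\lambda_s,\nu$ has $s+2\le q$ vertices, again contradicting minimality. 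The remaining subcase—$\nu\in\Delta\setminus\{\lambda_1,\ldots,\lambda_q\}$ with $c_\nu<c_{\max}$—is the most delicate: here I use the tree structure of $\widetilde\D(\Phi)$ (valid since $c_{\max}\ge 2$ rules out type $A$) to trace a shortest sub-path from $\nu$ to a max-coefficient vertex through the subtree attached at $\nu$, and concatenate to obtain a chain shorter than $\Lambda$.

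\emph{Step 3 (proof of (i)).} Once (ii) is in hand, I take the identity $\sum_{i=0}^\ell c_{\lambda_i}\lambda_i=0$ (with $c_{\lambda_0}:=1$) and inner-product with $\lambda_s^\vee$ to obtain the affine Cartan relation
\[
2c_{\lambda_s}=\sum_{\lambda_i\sim\lambda_s}|\langle\lambda_i,\lambda_s^\vee\rangle|\,c_{\lambda_i}.
\]
By (ii), for $0\le s\le q-1$ only $\lambda_{s-1}$ (if $s\ge 1$) and $\lambda_{s+1}$ contribute on the right. Verifying that the chain-direction Cartan entries $|\langle\lambda_{s\pm 1},\lambda_s^\vee\rangle|$ equal $1$ (which follows from checking that the minimizing chain does not ascend in length, since $\lambda_0=-\theta$ is long), the recursion reduces to $2c_{\lambda_s}=c_{\lambda_{s-1}}+c_{\lambda_{s+1}}$, which—together with the $s=0$ boundary case forcing $c_{\lambda_1}=2$—yields the arithmetic progression $c_{\lambda_s}=s+1$ by induction. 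In particular, $|\Lambda|=q+1=c_{\max}$.

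\emph{Main obstacle.} The hardest part is the $c_\nu<c_{\max}$ subcase of Step 2: the minimality of $\Lambda$ alone does not forbid such a neighbor, so the tree structure of $\widetilde\D(\Phi)$ must be exploited through a careful path analysis. A secondary subtlety in Step 3 is justifying that the relevant Cartan entries along the chain equal $1$, which requires examining the root-length pattern of the chain vertices.
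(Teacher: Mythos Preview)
Your overall plan---prove (ii) first from minimality and the tree structure, then feed it into the affine Cartan relation to get the arithmetic progression---looks natural but has two real gaps.

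\textbf{Gap in Step 2, third subcase.} Minimality together with the tree property of $\widetilde{\D}(\Phi)$ is not enough to exclude an extra neighbour $\nu\notin\Lambda$ of some $\lambda_s$ with $c_\nu<c_{\max}$. There is no reason the subtree hanging off $\nu$ should contain a vertex of coefficient $c_{\max}$ at all; and even if it does, you give no bound ensuring the concatenated path $\lambda_0,\dots,\lambda_s,\nu,\dots$ is strictly shorter than $\Lambda$. So this case is simply not handled. Any rescue here seems to require the affine relation $\sum_i c_{\lambda_i}\langle\lambda_i,\lambda_s\rangle=0$ at the node $\lambda_s$, at which point you are essentially back to the paper's method.

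\textbf{Gap/circularity in Step 3.} To reduce the affine relation to $2c_{\lambda_s}=c_{\lambda_{s-1}}+c_{\lambda_{s+1}}$ you need $|\langle\lambda_{s-1},\lambda_s\rangle|=1$, i.e.\ $\|\lambda_s\|\ge\|\lambda_{s-1}\|$. ``Non-ascending length from the long root $\lambda_0$'' gives the opposite inequality $\|\lambda_{s-1}\|\ge\|\lambda_s\|$; it does \emph{not} exclude a long--short transition inside the chain, which would make $|\langle\lambda_{s-1},\lambda_s\rangle|\in\{2,3\}$. Equality of lengths along $\lambda_0,\dots,\lambda_{q-1}$ is precisely the ``simple chain'' content of (ii), which your Step~2 never established. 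There is also the endpoint issue: (ii) as stated allows $\lambda_{q-1}$ to have further neighbours, so invoking the two-term recursion at $s=q-1$ is unjustified; you would need at least an inequality argument there to pin down $c_{\lambda_q}$.

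For comparison, the paper does not separate (i) and (ii). It writes the inequalities coming from $\langle\theta,\theta\rangle=2$, $\langle\theta,\lambda_1\rangle\ge 1$, and $\langle\theta,\lambda_j\rangle\ge 0$ for $2\le j\le q-1$, telescopes them to get $2-\langle\theta,\lambda_1\rangle\ge c_{\lambda_q}-c_{\lambda_{q-1}}$, and then uses minimality (which forces $c_{\lambda_{q-1}}<c_{\lambda_q}$) to make every inequality an equality. That single equality step simultaneously yields the recursion $c_{\lambda_{j+1}}=2c_{\lambda_j}-c_{\lambda_{j-1}}$, the single-edge condition $\langle\lambda_{j-1},\lambda_j\rangle=\langle\lambda_j,\lambda_{j-1}\rangle=-1$, and the absence of extra neighbours---exactly the three facts your Steps~2--3 try to obtain separately.
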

\begin{proof}
If $c_{\max}=1$, obviously, $\Lambda=\{\lambda_0\}$.
Now assume that $c_{\max}\ge 2$.
\begin{equation*}\label{many}
\begin{aligned}
2=\lan\theta,\theta\ran & = \sum_{s=1}^\ell  c_{\lambda_s}\lan\lambda_s,\theta\ran \ge c_{\lambda_1}\lan\lambda_1,\theta\ran, \\
\lan\theta,\lambda_1\ran & = \sum_{s=1}^\ell  c_{\lambda_s}\lan\lambda_s,\lambda_1\ran \le 2c_{\lambda_1} +c_{\lambda_2}\lan\lambda_2,\lambda_1\ran,  \\
0 \le \lan\theta,\lambda_j\ran & \le c_{\lambda_{j-1}}\lan\lambda_{j-1},\lambda_j\ran+2c_{\lambda_j} +c_{\lambda_{j+1}}\lan\lambda_{j+1},\lambda_j\ran \quad (2 \le j \le q-1).
\end{aligned}
\end{equation*}
By definition of $\Lambda$, $\lan\lambda_1,\theta\ran=1$. Thus 
\begin{equation}\label{equas}
\begin{aligned}
2-c_{\lambda_1} &  \ge 0, \\
\quad 2c_{\lambda_1} -c_{\lambda_2}-   \lan\theta,\lambda_1\ran &  \ge 0,\\
- c_{\lambda_{j-1}}+2c_{\lambda_j} -c_{\lambda_{j+1}} & \ge 0.
\end{aligned}
\end{equation} 

Adding up the inequalities in \eqref{equas} yields
$$2-   \lan\theta,\lambda_1\ran  \ge c_{\lambda_q}-c_{\lambda_{q-1}} .$$

\vn
If $\lan\theta,\lambda_1\ran=2$, by the minimality, we must have $q=1$, $\Lambda=\{\lambda_0, \lambda_1\}$, and $c_{\max}=c_{\lambda_1}=2$.

\vn
If $\lan\theta,\lambda_1\ran=1$, by the minimality, $1+ c_{\lambda_{q-1}}= c_{\lambda_q}$. 
Thus equality occurs here and also in each of the inequalities used above. 
We obtain a recurrence relation defined by $c_{\lambda_0}=1$, $c_{\lambda_1}=2$, $c_{\lambda_{j+1}}= 2c_{\lambda_j} -c_{\lambda_{j-1}}$ $(1 \le j \le q-1)$.
Thus $c_{\lambda_s}=s + 1$ for $0 \le s\le q$.
Additionally, from $\lan\lambda_{j-1},\lambda_j\ran=\lan\lambda_{j},\lambda_{j-1}\ran=-1$ $(1 \le j \le q-1)$, we get
$\|\lambda_0\|=\|\lambda_1\|=\ldots=\|\lambda_{q-1}\|$.
Thus $(\lambda_0,\lambda_1, \ldots, \lambda_{q-1})$ is a simple chain of $\widetilde{\D}(\Phi)$ connected to the other vertices only at $\lambda_{q-1}$. 
\end{proof}
\begin{remark}\label{rem:only1}
If $c_{\max}=1$, $\D(\Phi)$ contains only single edges (i.e., all roots of $\Phi$ have the same length). 
In addition, if $\ell \ge 2$, $-\theta$ is connected only to the terminal vertices of $\D(\Phi)$. 
Furthermore, the equation $\lan\theta,\theta\ran=2$ implies that $\D(\Phi)$ has exactly two terminal vertices. 
In this case, we know explicitly that $\D(\Phi)$ must be a simple chain.
 If $c_{\max}\ge 2$, by Proof of Proposition \ref{lem:coes}, $-\theta$ is connected only to one vertex of ${\D}(\Phi)$.
\end{remark}
\begin{corollary}\label{cor:not-important}
Assume that $c_{\max}\ge 2$. Either $\lan\lambda_{q-1},\lambda_{q}\ran\in\{-2,-3\}$ or $\lambda_{q}$ is a ramification point of $\widetilde{\D}(\Phi)$.
\end{corollary}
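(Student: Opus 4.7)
The plan is to prove the contrapositive: suppose $\lan\lambda_{q-1},\lambda_q\ran=-1$ (the only remaining negative integer) and that $\lambda_q$ is not a ramification point of $\widetilde{\D}(\Phi)$, and derive a contradiction. First I would establish that $\|\lambda_q\|=\|\theta\|$. Proposition \ref{lem:coes}(ii) gives $\|\lambda_{q-1}\|=\|\theta\|$ (trivially if $q=1$, in which case $\lambda_{q-1}=-\theta$). Since $\|\lambda_q\|^2/\|\lambda_{q-1}\|^2=-\lan\lambda_q,\lambda_{q-1}\ran$ and $\theta$ is a longest root, the options $\lan\lambda_q,\lambda_{q-1}\ran\in\{-2,-3\}$ would force $\|\lambda_q\|>\|\theta\|$; hence $\lan\lambda_q,\lambda_{q-1}\ran=-1$ and $\|\lambda_q\|=\|\theta\|$. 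Applying the standard fact $\lan\alpha,\theta\ran\in\{0,1\}$ for any simple root $\alpha$ to the equal-length pair $\lambda_q,\theta$ then yields $\lan\theta,\lambda_q\ran\in\{0,1\}$.

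Next I would expand
\[\lan\theta,\lambda_q\ran=\sum_{s=1}^\ell c_{\lambda_s}\lan\lambda_s,\lambda_q\ran=2c_{\lambda_q}+R,\]
where $R:=\sum_{\nu}c_\nu\lan\nu,\lambda_q\ran$ sums over simple-root neighbors $\nu$ of $\lambda_q$ in $\D(\Phi)$. From $c_{\lambda_q}=q+1$ and $\lan\theta,\lambda_q\ran\le 1$ we deduce $R\le -2q-1$. By Remark \ref{rem:only1}, $-\theta$ is adjacent only to $\lambda_1$ in $\widetilde{\D}(\Phi)$, so ``$\lambda_q$ not a ramification point'' forces $\lambda_q$ to have at most two simple-root neighbors, namely $\lambda_{q-1}$ (present only when $q\ge 2$, contributing $-q$ to $R$) together with at most one further neighbor $\mu$. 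The bound on $R$ then forces $\mu$ to exist and to satisfy $c_\mu\cdot|\lan\mu,\lambda_q\ran|\ge q+1$.

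Two subcases remain. If $|\lan\mu,\lambda_q\ran|\ge 2$, the crystallographic product $|\lan\mu,\lambda_q\ran|\cdot|\lan\lambda_q,\mu\ran|\le 3$ forces $|\lan\lambda_q,\mu\ran|=1$, whence $\|\mu\|^2/\|\lambda_q\|^2\ge 2$ and $\|\mu\|>\|\theta\|$---impossible. If $|\lan\mu,\lambda_q\ran|=1$, then $c_\mu\ge q+1=c_{\max}$, so $c_\mu=c_{\max}$, and I would invoke Proposition \ref{lem:coes} on a minimal chain $(\mu_0=-\theta,\mu_1,\ldots,\mu_q=\mu)$ ending at $\mu$, which has length $q+1$ by part (i). Remark \ref{rem:only1} pins $\mu_1=\lambda_1$, and the simple-chain rigidity of Proposition \ref{lem:coes}(ii) inductively forces $\mu_j=\lambda_j$ for $0\le j\le q-1$, so $\mu$ is adjacent to $\lambda_{q-1}$ in $\D(\Phi)$. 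For $q=1$ this says $\mu=\lambda_1=\lambda_q$, contradicting $\mu\ne\lambda_q$; for $q\ge 2$, the edges $\lambda_{q-1}\sim\lambda_q$, $\lambda_q\sim\mu$, and $\mu\sim\lambda_{q-1}$ form a $3$-cycle in $\D(\Phi)$, contradicting that the Dynkin graph is a tree. The hardest step is this final one, which requires invoking Proposition \ref{lem:coes} a second time to get the rigidity of any minimal chain from $-\theta$ to a simple root of maximal coefficient.
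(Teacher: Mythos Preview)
Your proof is correct, but it takes a longer route than the paper's. The paper's argument is essentially a one-line computation: once you know $\lambda_q$ is long (which you also establish), every simple neighbor $\nu$ of $\lambda_q$ automatically satisfies $\lan\nu,\lambda_q\ran=-1$, so your Subcase~A is vacuous. Moreover, $\lan\theta,\lambda_q\ran$ is known \emph{exactly} (it equals $1$ if $q=1$ since $-\theta$ is adjacent to $\lambda_1$, and $0$ if $q\ge2$ since by Remark~\ref{rem:only1} $-\theta$ is adjacent only to $\lambda_1$). Expanding $\lan\theta,\lambda_q\ran$ then gives $c_\mu=c_{\max}+1$ on the nose, an immediate contradiction. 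You only use the inequality $\lan\theta,\lambda_q\ran\le1$, obtain merely $c_\mu\ge c_{\max}$, and are then forced into the second invocation of Proposition~\ref{lem:coes} and the tree/cycle argument to finish.

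What your approach buys is a certain robustness: you never need the precise value of $\lan\theta,\lambda_q\ran$, and the cycle argument is a pleasant structural way to see why a second vertex of maximal coefficient adjacent to $\lambda_q$ cannot exist. What the paper's approach buys is brevity---it avoids re-invoking Proposition~\ref{lem:coes} and the induction on the $\mu$-chain entirely. A small streamlining of your write-up: once you have $\|\lambda_q\|=\|\theta\|$, you can immediately dispense with Subcase~A and sharpen the bound in Subcase~B to $c_\mu>c_{\max}$ using the exact value of $\lan\theta,\lambda_q\ran$, which collapses the whole argument to the paper's.
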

\begin{proof}
Assume that  $c_{\max}= 2$ i.e., $q=1$. Suppose that $\lan\lambda_0,\lambda_1\ran=-1$, and $\lambda_1$ is connected only to one vertex of $\widetilde{\D}(\Phi)$ apart from $\lambda_0$, say $\lambda_2$. 
Thus $\lambda_1$ is long and $\lan\lambda_2,\lambda_1\ran=-1$. 
From $\lan\lambda_0,\lambda_1\ran=-1$, we get $c_{\lambda_2}=3$, which is absurd. 
The case $c_{\max}\ge 3$ i.e., $q \ge 2$ is treated similarly by using $\lan\lambda_0,\lambda_q\ran=0$ in place of $\lan\lambda_0,\lambda_1\ran=-1$.
\end{proof}

Next, we prove several combinatorial properties of positive roots according to their locations on the root poset (with respect to the partial order $\le$).
\begin{lemma}\label{rem:k=2} 
Assume that $\beta \in \Phi^+$, $\alpha \in \Delta$, and $\lan\beta, \alpha \ran=k \in \{2,3\}$. 
Then there exists $\alpha' \in \Delta\setminus\{\alpha\}$ such that $\beta- (k-1)\alpha -\alpha'\in \Phi^+$. 
\end{lemma}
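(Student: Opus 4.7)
The plan is to set $\gamma := \beta - (k-1)\alpha$, verify that $\gamma \in \Phi^+$, and then locate a simple root $\alpha' \ne \alpha$ such that $\gamma - \alpha' \in \Phi^+$; this $\alpha'$ will be the one asserted in the lemma. One may assume $\beta \ne \alpha$ (otherwise $k = 2$ and $\gamma = 0$, so the statement does not apply). Because $\langle \beta, \alpha \rangle = k \ge 2$, the $\alpha$-string through $\beta$ contains every $\beta - i\alpha$ with $0 \le i \le k$, so in particular $\gamma \in \Phi$. Since $\beta$ is not a multiple of $\alpha$, it has a positive coefficient at some simple root other than $\alpha$, and this coefficient is inherited by $\gamma$; hence $\gamma \in \Phi^+$.

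Next I would compute $\langle \gamma, \alpha \rangle = k - 2(k-1) = 2 - k \le 0$. Writing $\gamma = \sum_i d_i' \alpha_i$ with $d_i' \in \mathbb{Z}_{\ge 0}$, the identity $\|\gamma\|^2 = \sum_i d_i'(\gamma, \alpha_i) > 0$ forces some $\alpha' \in \Delta$ with $(\gamma, \alpha') > 0$; because $(\gamma, \alpha) \le 0$, one may take $\alpha' \ne \alpha$. Applying the same expansion to $\langle \gamma, \alpha' \rangle$ itself then shows that the coefficient of $\alpha'$ in $\gamma$ must be at least $1$, since otherwise $\langle \gamma, \alpha' \rangle$ would be a sum of nonpositive contributions $d_j' \langle \alpha_j, \alpha' \rangle$ with $\alpha_j \ne \alpha'$. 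Consequently $\gamma - \alpha'$ has nonnegative coefficient at every simple root.

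It remains to argue that $\gamma - \alpha' \in \Phi$, after which $\gamma - \alpha' \in \Phi^+$ follows (being a nonzero root with nonnegative simple-root coefficients). If $\gamma \ne \alpha'$, the claim is immediate from the $\alpha'$-string through $\gamma$ together with $\langle \gamma, \alpha' \rangle \ge 1$. To exclude the possibility $\gamma = \alpha'$, suppose for contradiction it holds; then $\beta = \alpha' + (k-1)\alpha$ and $\langle \alpha', \alpha \rangle = 2 - k$. Inspecting the $\alpha$-string through $\alpha'$: since $\alpha' - \alpha$ has simple-root coefficients of opposite signs it is not a root, so the string does not descend from $\alpha'$; hence it ascends by at most $q = -\langle \alpha', \alpha \rangle = k - 2 \in \{0, 1\}$. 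But $\beta = \alpha' + (k-1)\alpha \in \Phi$ would require $q \ge k - 1 \in \{1, 2\}$, a contradiction in both cases $k = 2, 3$. The main obstacle is precisely this final exclusion step; the uniform $\alpha$-string argument above sidesteps the case-by-case inspection of rank-two subsystems ($A_2$, $B_2$, $G_2$) that would otherwise be required to rule out $\gamma = \alpha'$.
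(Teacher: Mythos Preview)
Your proof is correct and follows the same skeleton as the paper's: set $\gamma=\beta-(k-1)\alpha$, show $\gamma\in\Phi^+$, show $(\gamma,\alpha)\le 0$, produce $\alpha'\in\Delta\setminus\{\alpha\}$ with $(\gamma,\alpha')>0$, and conclude $\gamma-\alpha'\in\Phi^+$. The implementations of two of these steps differ.

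First, where you compute directly $\langle\gamma,\alpha\rangle=k-2(k-1)=2-k\le 0$, the paper instead runs a length argument: it observes $\|\beta\|=\|\beta-k\alpha\|>\|\alpha\|$, then derives a contradiction from the hypothesis $\langle\gamma,\alpha\rangle\ge 1$ by comparing $\|\gamma\|$ and $\|\gamma-\alpha\|$. Your one-line computation is decidedly simpler. Second, to locate $\alpha'$ the paper uses the obtuse-angle linear-independence criterion (if $(\gamma,\alpha_i)\le 0$ for all $i$ then $\{\gamma\}\cup\Delta$ would be independent), whereas you expand $\|\gamma\|^2=\sum_i d_i'(\gamma,\alpha_i)$; both are standard and equivalent in effect.

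For the final step, your $\alpha$-string exclusion of $\gamma=\alpha'$ is valid but heavier than necessary. The paper's route is shorter: from $\beta\ne\alpha$ one gets $\beta-k\alpha\in\Phi^+$ (it retains a positive coefficient at some simple root $\ne\alpha$), hence ${\rm ht}(\beta)\ge k+1$ and ${\rm ht}(\gamma)\ge 2$, so $\gamma$ cannot be simple. You already had the ingredients for this (you noted $\gamma$ inherits a positive coefficient at some simple root other than $\alpha$; combining with the fact that the coefficient of $\alpha$ in $\beta-k\alpha\in\Phi^+$ is nonnegative gives ${\rm ht}(\gamma)\ge 2$ immediately).
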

\begin{proof}
By the assumption, $s_\alpha (\beta)= \beta -k \alpha \in \Phi$.
Thus ${\rm ht}(\beta)\ge k+1$ and $\| \beta \| = \| \beta -k \alpha \| > \| \alpha \|$. 
In addition, $\beta- i\alpha \in \Phi^+$ for all $0 \le i\le k$ since the $\alpha$-string through $\beta$ is unbroken \cite[Chapter III, 9.4]{H72}.
If $\lan\beta- (k-1)\alpha, \alpha \ran \ge 1$, then $\|\beta -k \alpha\| \le \|  \beta -(k-1) \alpha\|$ i.e., $ \beta -(k-1) \alpha$ is a long root. 
We then have $\lan\beta- (k-1)\alpha, \alpha \ran \ge 2$. 
But it implies that  $\|\beta -k \alpha\| < \|  \beta -(k-1) \alpha\|$, a contradiction.
Thus $(\beta- (k-1)\alpha, \alpha)\le 0$. 
Suppose that $(\beta-(k-1)\alpha, \alpha' )\le 0$  for all $ \alpha' \in \Delta\setminus\{\alpha\}$. 
By \cite[Chapter III, 10.1, Theorem$^\prime$(3)]{H72}, $\{\beta- (k-1)\alpha\}\cup \Delta$ is a linearly independent set, which is absurd.
There  exists $\alpha' \in \Delta\setminus\{\alpha\}$ such that  $(\beta- (k-1)\alpha, \alpha')> 0$ hence $\beta- (k-1)\alpha -\alpha' \in \Phi^+$. 
\end{proof}

\begin{lemma}\label{lem:3roots} 
Suppose $\beta_1,  \beta_2, \beta_3 \in \Phi$ with $\beta_1+  \beta_2+ \beta_3 \in \Phi$ and $\beta_i+  \beta_j \ne 0$ for $i \ne j$. Then at least two of the three partial sums $\beta_i+  \beta_j$ belong to $ \Phi$.
\end{lemma}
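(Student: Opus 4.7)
The plan is to argue by contradiction: suppose that at most one of the pairwise sums $\beta_i+\beta_j$ lies in $\Phi$. I would build everything on two standard facts for non-opposite roots $\alpha,\beta\in\Phi$: (a) if $(\alpha,\beta)<0$ then $\alpha+\beta\in\Phi$; equivalently, if $\alpha+\beta\notin\Phi$ and $\alpha+\beta\ne 0$ then $(\alpha,\beta)\ge 0$; and (b) if $(\alpha,\beta)>0$ then $\alpha-\beta\in\Phi\cup\{0\}$. Setting $\gamma:=\beta_1+\beta_2+\beta_3\in\Phi$, the relation $(\gamma,\gamma)>0$ forces some $(\gamma,\beta_i)>0$. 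Applying (b) to $\gamma$ and $\beta_i$ gives $\gamma-\beta_i=\beta_j+\beta_k\in\Phi\cup\{0\}$, and the hypothesis $\beta_j+\beta_k\ne 0$ promotes this to $\beta_j+\beta_k\in\Phi$. So at least one pairwise sum is already a root; after relabelling, assume $\beta_2+\beta_3\in\Phi$ while $\beta_1+\beta_2$ and $\beta_1+\beta_3$ are not in $\Phi$.

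Next I would squeeze information out of both hypotheses. By (a), $\beta_1+\beta_2\notin\Phi$ and $\beta_1+\beta_3\notin\Phi$ give $(\beta_1,\beta_2)\ge 0$ and $(\beta_1,\beta_3)\ge 0$. Applying (b) with $\beta_2$ in place of $\beta_i$: if $(\gamma,\beta_2)>0$ then $\gamma-\beta_2=\beta_1+\beta_3\in\Phi\cup\{0\}$, which is excluded by our standing assumptions (the zero case would mean $\beta_1+\beta_3=0$, ruled out by hypothesis). Hence $(\gamma,\beta_2)\le 0$, and symmetrically $(\gamma,\beta_3)\le 0$. Expanding
$$(\gamma,\beta_2)=(\beta_1,\beta_2)+(\beta_2,\beta_2)+(\beta_2,\beta_3)\le 0$$
and using $(\beta_1,\beta_2)\ge 0$ yields $(\beta_2,\beta_3)\le-(\beta_2,\beta_2)$, i.e.\ $\lan\beta_3,\beta_2\ran\le-2$. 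The symmetric computation gives $\lan\beta_2,\beta_3\ran\le-2$.

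Finally I would invoke the classical bound $\lan\alpha,\beta\ran\lan\beta,\alpha\ran=4\cos^2\theta_{\alpha,\beta}\in\{0,1,2,3,4\}$, with value $4$ only when $\alpha$ and $\beta$ are proportional. With both Cartan integers $\le-2$ the product is $\ge 4$, forcing equality $4$, so $\beta_2=\pm\beta_3$; the choice $\beta_2=\beta_3$ gives $\lan\beta_2,\beta_3\ran=2$, so we must have $\beta_2=-\beta_3$, contradicting $\beta_2+\beta_3\ne 0$. The main delicate point is obtaining the strict Cartan bound $\le -2$ rather than merely $\le -1$; this is exactly what the sign condition $(\beta_1,\beta_2)\ge 0$ extracted from the failure of $\beta_1+\beta_2\in\Phi$ is designed to give, and it is the hinge of the whole argument.
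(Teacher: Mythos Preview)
Your argument is correct. The paper does not give its own proof of this lemma; it simply cites \cite[\S11, Lemma 11.10]{LN04}. So there is nothing to compare at the level of strategy: you have supplied a complete, self-contained proof where the paper defers to an external source. Your approach is the standard one and works cleanly. A couple of small remarks for polish: (i) when you invoke fact~(b) on $\gamma$ and $\beta_i$, the case $\gamma=\beta_i$ is implicitly excluded because it would force $\beta_j+\beta_k=0$, which is ruled out by hypothesis---it would be worth saying this once explicitly; (ii) the Cauchy--Schwarz step $\lan\beta_2,\beta_3\ran\lan\beta_3,\beta_2\ran\le 4$ with equality only for proportional roots tacitly uses that $\Phi$ is reduced, which is the standing assumption of the paper, so no issue there.
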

\begin{proof}
 See, for example, \cite[\S11, Lemma 11.10]{LN04}.
\end{proof}

Recall the notation $\Theta^{(r)}=\{\alpha \in \Phi^+ \mid {\rm ht}(\alpha)=r\}$.
It follows from Theorem \ref{thm:dual} that $|\Theta^{(r)}|=1$  if $m_{\ell-1}<r\le m_{\ell}$.
\begin{corollary}\label{cor:G2} 
Assume that $\beta \in \Phi^+$, $\alpha \in \Delta$, $\lan\beta, \alpha \ran=3$ and $\{\alpha\}=\{\alpha_i \in \Delta \mid \beta-\alpha_i \in\Phi^+\}$. 
Then there is no $\alpha' \in \Delta\setminus\{\alpha\}$ such that $\beta-\alpha -\alpha'\in \Phi$. 
In particular, the statement holds true if the last assumption is replaced by ${\rm ht}(\beta)\ge m_{\ell-1}+2$.
\end{corollary}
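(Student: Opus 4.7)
The plan is to argue by contradiction: suppose $\beta-\alpha-\alpha'\in\Phi$ for some $\alpha'\in\Delta\setminus\{\alpha\}$, and produce an inadmissible Cartan pairing by analyzing the $\gamma$-string through $\beta$, where $\gamma:=\alpha+\alpha'$.

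First I would lay out the preliminaries. Since $\lan\beta,\alpha\ran=3$, the $\alpha$-string gives $\beta-3\alpha\in\Phi^+$, so ${\rm ht}(\beta)\ge 4$; thus $\beta-\alpha-\alpha'\in\Phi^+$ (positive height) and $\beta-\alpha'\notin\Phi$ (positive height together with the uniqueness of $\alpha$). Applying Lemma~\ref{lem:3roots} to the triple $(-\alpha,-\alpha',\beta)$ then forces $\alpha+\alpha'\in\Phi^+$, so $\alpha$ and $\alpha'$ are adjacent in the Dynkin graph. A short case analysis on $(\lan\alpha,\alpha'\ran,\lan\alpha',\alpha\ran)$---exploiting that $\lan\beta,\alpha\ran=3$ imposes a root-length ratio of $\sqrt{3}$ and that at most two root lengths can occur in $\Phi$---eliminates entries of absolute value $2$ or $3$, each of which would either introduce a third root length or produce the impossible value $\lan\beta-\alpha-\alpha',\alpha\ran=4$. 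Hence $\lan\alpha,\alpha'\ran=\lan\alpha',\alpha\ran=-1$ and $\|\alpha\|=\|\alpha'\|$, and a parallel length/uniqueness bookkeeping pins $\lan\beta,\alpha'\ran$ to $\{0,-3\}$.

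Setting $\gamma:=\alpha+\alpha'\in\Phi^+$ (short, of length $\|\alpha\|$), I split on the two remaining values of $\lan\beta,\alpha'\ran$. If $\lan\beta,\alpha'\ran=-3$, then $\lan\beta,\gamma\ran=0$; since $\beta-\gamma\in\Phi$ by hypothesis, the symmetry of the $\gamma$-string also places $\beta+\gamma$ in $\Phi$, yet $\lan\beta+\gamma,\alpha\ran=3+2-1=4$ exceeds the admissible range for Cartan integers, a contradiction. If $\lan\beta,\alpha'\ran=0$, then $\lan\beta,\gamma\ran=3$, so the $\gamma$-string forces $\beta-2\gamma\in\Phi$; a direct length computation gives $\|\beta-2\gamma\|=\|\alpha\|$, yet $\lan\beta-2\gamma,\alpha'\ran=-2$, which is impossible for two roots of equal length (whose pairing must lie in $\{0,\pm 1\}$; and $\beta-2\gamma\ne\pm\alpha'$ because $\beta$ is long while every element of the $\mathbb{R}$-span of $\alpha,\alpha'$ lies in an $A_2$-subsystem of only short roots).

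The main obstacle I anticipate is keeping the length/Cartan-integer case analyses crisp and self-contained without invoking the classification of root systems; the clean facts that do the work are that at most two root lengths occur in $\Phi$ and that equal-length roots pair in $\{0,\pm 1\}$. The ``in particular'' clause then follows at once from Theorem~\ref{thm:dual}: if ${\rm ht}(\beta)\ge m_{\ell-1}+2$, then ${\rm ht}(\beta-\alpha_i)>m_{\ell-1}$ for every $\alpha_i\in\Delta$ with $\beta-\alpha_i\in\Phi^+$, and since $|\Theta^{(r)}|=1$ for $r>m_{\ell-1}$, the set $\{\alpha_i\mid\beta-\alpha_i\in\Phi^+\}$ is a singleton, which must equal $\{\alpha\}$ because $\beta-\alpha\in\Phi^+$.
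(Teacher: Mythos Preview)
Your argument is correct. The opening is exactly the paper's: apply Lemma~\ref{lem:3roots} to the triple $(\beta-\alpha-\alpha',\alpha,\alpha')$ summing to $\beta$ (equivalently your $(-\alpha,-\alpha',\beta)$) and use the uniqueness hypothesis to force $\alpha+\alpha'\in\Phi^+$. From there, however, you take a noticeably longer detour than the paper. After establishing $\lan\alpha',\alpha\ran\in\{-1,-2,-3\}$, the paper simply writes the single identity
\[
\lan\beta-\alpha-\alpha',\alpha\ran+\lan\alpha',\alpha\ran=\lan\beta,\alpha\ran-2=1,
\]
so $\lan\beta-\alpha-\alpha',\alpha\ran\in\{2,3,4\}$, and each value is immediately impossible by the two-root-lengths constraint (the value $4$ is not a Cartan integer; the value $3$ forces $\lan\alpha',\alpha\ran=-2$ and hence a $\sqrt2$-length alongside the existing $\sqrt3$-ratio; the value $2$ forces $\beta-\alpha-\alpha'$ to be a $\sqrt2$-length root, since $\beta-\alpha-\alpha'=\alpha$ would place the long root $\beta$ in the $A_2$ span of $\alpha,\alpha'$). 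You compute exactly this quantity to dispose of $\lan\alpha',\alpha\ran=-3$, but then do not notice that the same computation already finishes the surviving case $\lan\alpha',\alpha\ran=-1$: there $\lan\beta-\alpha-\alpha',\alpha\ran=2$, which again introduces a third root length. Instead you pass to $\gamma=\alpha+\alpha'$ and run a $\gamma$-string analysis, which works but is strictly more labor. Your ``in particular'' clause is handled the same way as in the paper, via Theorem~\ref{thm:dual}.
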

\begin{proof}
Suppose that there exists $\alpha' \in \Delta\setminus\{\alpha\}$ such that $\gamma:=\beta-\alpha -\alpha'\in \Phi$. 
By Lemma \ref{lem:3roots}, $\alpha +\alpha' \in \Phi^+$. 
Thus $\lan\alpha' ,\alpha \ran \in \{ -1, -2, -3\}$.
Moreover,  $\lan\gamma ,\alpha \ran + \lan\alpha' ,\alpha \ran =1$. 
This contradicts to the fact that at most two root lengths occur in $\Phi$.
\end{proof}
Recall the notation $\U=\{\theta_i\in \Phi^+ \mid {\rm ht}(\theta_i) >m_{\ell-1}\}$, and $m=|\U|=m_{\ell}-m_{\ell-1}=m_{2}-1$.
Suppose that the elements of $\U$ are labeled so that $\theta_1$ denotes the highest root, and $\xi_i=\theta_i-\theta_{i+1} \in\Delta$ for $1 \le i \le m-1$.
We also adopt a convention $\xi_0:=-\theta_1$.
Set $\Xi:=\{\xi_i \mid 0 \le i \le m-1\}$.
Note that $\Xi$ is a multiset, not necessarily a set. 
\begin{corollary}\label{cor:2}
Suppose that $m \ge 2$.
Then the simple roots $\xi_0,\ldots,\xi_{m-2}$ all are non-ramification points of $\widetilde{\D}(\Phi)$. 
\end{corollary}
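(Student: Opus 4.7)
The plan is to show that for each $0 \le j \le m-2$ the vertex $\xi_j$ of $\widetilde{\D}(\Phi)$ has at most two neighbors in $\widetilde{\D}(\Phi)$. The vertex $\xi_0 = -\theta$ is handled directly by Remark \ref{rem:only1}: when $c_{\max} = 1$ the extended Dynkin graph $\widetilde{\D}(\Phi)$ is a cycle and every vertex has degree two, which actually finishes the corollary outright, while for $c_{\max} \ge 2$ the vertex $-\theta$ has degree one. So it suffices to treat $1 \le j \le m-2$ under the assumption $c_{\max} \ge 2$.

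The key step I would prove is the combinatorial lemma: for $1 \le j \le m - 2$ and any $\alpha \in \Delta \setminus \{\xi_j\}$ with $(\xi_j, \alpha) < 0$, one has $\alpha \in \{\xi_{j-1}, \xi_{j+1}\}$. Since $j+1 \le m-1$, the root $\theta_{j+1}$ is the unique element of $\Theta^{({\rm ht}(\theta_j)-1)}$, so $\xi_j$ (resp.\ $\xi_{j+1}$) is the only simple root $\beta$ with $\theta_j - \beta \in \Phi^+$ (resp.\ $\theta_{j+1} - \beta \in \Phi^+$). If $\alpha \ne \xi_{j+1}$, then $\lan \theta_j, \alpha \ran \le 0$ and $\lan \theta_{j+1}, \alpha \ran \le 0$; combining these with the identity $\lan \theta_{j+1}, \alpha \ran = \lan \theta_j, \alpha \ran - \lan \xi_j, \alpha \ran$ and $\lan \xi_j, \alpha \ran \le -1$ forces $\lan \theta_j, \alpha \ran \le -1$, hence $\theta_j + \alpha \in \Phi$. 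For $j = 1$ this would produce a root of height ${\rm h}$, which is impossible; for $j \ge 2$ the root $\theta_j + \alpha$ lies in $\U$ and by uniqueness equals $\theta_{j-1}$, forcing $\alpha = \xi_{j-1}$.

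Next, combining Proposition \ref{lem:coes}(ii) with the observation that $\lambda_1 = \xi_1$ (both are the unique simple root with $\theta - \cdot \in \Phi^+$), I see that when $c_{\max} \ge 2$ the vertex $-\theta$ has a unique neighbor $\xi_1$ in $\widetilde{\D}(\Phi)$. Therefore $-\theta$ is a neighbor of $\xi_j$ if and only if $\xi_j = \xi_1$. Together with the key lemma this immediately handles: the case $j = 1$, where the neighbors lie in $\{-\theta, \xi_2\}$; and the case $2 \le j \le m-2$ with $\xi_j \ne \xi_1$, where the neighbors lie in $\{\xi_{j-1}, \xi_{j+1}\}$.

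The delicate case, which I expect to be the main obstacle, is $2 \le j \le m-2$ with $\xi_j = \xi_1$, since a priori the above only bounds the neighborhood by the three-element set $\{-\theta, \xi_{j-1}, \xi_{j+1}\}$. The trick I would use is that the vertex $\xi_j = \xi_1$ of $\widetilde{\D}(\Phi)$ is \emph{the same vertex} as $\xi_1$, so its neighborhood in the extended Dynkin graph is intrinsic and does not depend on the index $j$. Applying the key lemma with index $1$ in place of $j$ (which is legitimate because $j \ge 2$ and $j \le m-2$ force $m \ge 4$, hence $1 \le m-2$), the simple neighbors of $\xi_1$ lie in $\{\xi_2\}$, and together with $-\theta$ this gives at most two neighbors. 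This re-indexing observation, exploiting that the bound from the key lemma tightens as the index decreases, closes the only remaining case.
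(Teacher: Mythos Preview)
Your argument is correct and follows the same overall strategy as the paper: show that every simple-root neighbour of $\xi_i$ in $\D(\Phi)$ lies in $\{\xi_{i-1},\xi_{i+1}\}$, and combine this with the fact (Remark~\ref{rem:only1}) that $-\theta$ has a unique neighbour, namely $\xi_1$. The paper obtains the dichotomy ``either $\theta_i+\alpha\in\Phi$ or $\theta_{i+1}-\alpha\in\Phi$'' by applying Lemma~\ref{lem:3roots} to the triple $(\xi_i+\alpha,\ \theta_{i+1},\ -\alpha)$ summing to $\theta_i$; you reach the same conclusion by a direct inner-product computation using $\lan\theta_{j+1},\alpha\ran=\lan\theta_j,\alpha\ran-\lan\xi_j,\alpha\ran$. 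Your treatment is in fact a bit more careful than the paper's about the multiset issue (the possibility that $\xi_j=\xi_1$ as vertices for some $j\ge 2$), which the paper handles only implicitly by the fact that the $i=1$ analysis already settles the neighbourhood of that \emph{vertex}.

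One small redundancy: your discussion of $c_{\max}=1$ is vacuous here, since by Remark~\ref{rem:only1} that case forces $-\theta$ to have two simple neighbours, hence $|\Theta^{(m_\ell-1)}|\ge 2$ and $m=1$; the hypothesis $m\ge 2$ already places you in the $c_{\max}\ge 2$ regime.
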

\begin{proof}
By Remark \ref{rem:only1}, the condition $m \ge 2$ ensures that $\xi_1$ is the unique vertex of ${\D}(\Phi)$ connected to $\xi_0$. 
Suppose that $m \ge 3$. 
Fix $\xi_i \in \Xi$, $1 \le i\le m-2$ and let $\alpha$ be an adjacent vertex to $\xi_i$ on $\D(\Phi)$. 
We have $(\xi_i+\alpha)+\theta_{i+1}-\alpha= \theta_i \in\Phi^+$.
By Lemma \ref{lem:3roots}, either $\theta_i+\alpha\in\Phi^+$ or $\theta_{i+1}-\alpha \in\Phi^+.$
If $i=1$ then $\alpha=\xi_2$. If $i > 1$ then $\alpha \in \{\xi_{i-1}, \xi_{i+1}\}$.
Thus $\xi_i$ is not a ramification point.
\end{proof}

\begin{lemma}\label{lem:inner-prod} 
Suppose $\beta_1, \beta_2 \in \Phi$ and $\beta_1- \beta_2\in \Phi$. 
If at least one of $\beta_1, \beta_2$ is a long root, then $(\beta_1, \beta_2)>0$.
\end{lemma}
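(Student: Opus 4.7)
My plan is to argue by contradiction via a direct length computation, leveraging the fact (recalled in Section~\ref{sec:pre}) that at most two root lengths occur in $\Phi$.

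Suppose toward a contradiction that $(\beta_1,\beta_2) \le 0$. Expanding the squared norm gives
\begin{equation*}
\|\beta_1-\beta_2\|^2 \;=\; \|\beta_1\|^2 + \|\beta_2\|^2 - 2(\beta_1,\beta_2) \;\ge\; \|\beta_1\|^2 + \|\beta_2\|^2.
\end{equation*}
Let $L$ denote the common squared length of the long roots in $\Phi$. Assume first that $\beta_1$ is long, so $\|\beta_1\|^2=L$. Since $\beta_2\ne 0$, we have $\|\beta_2\|^2>0$, and therefore $\|\beta_1-\beta_2\|^2 > L$. But $\beta_1-\beta_2\in\Phi$ by hypothesis, so its squared length is at most $L$, a contradiction. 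The case where $\beta_2$ is long is identical (or follows from the symmetry $\beta_1-\beta_2\in\Phi \iff \beta_2-\beta_1\in\Phi$ combined with switching the roles of $\beta_1$ and $\beta_2$ in the computation above).

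There is no real obstacle to this approach; the entire argument is a two-line inequality once one observes that $(\beta_1,\beta_2)\le 0$ forces $\|\beta_1-\beta_2\|$ to strictly exceed the length of any long root. The only input beyond elementary Euclidean geometry is the standard fact that an irreducible crystallographic root system contains at most two root lengths, which is already cited in Section~\ref{sec:pre}.
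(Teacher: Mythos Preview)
Your argument is correct and is precisely the kind of direct length computation the paper has in mind when it writes ``Straightforward'' for this lemma. There is nothing to add: the contradiction $\|\beta_1-\beta_2\|^2 > L$ from $(\beta_1,\beta_2)\le 0$ and one $\beta_i$ long is exactly the intended one-line proof.
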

\begin{proof}
Straightforward. 
\end{proof}

\begin{proposition}\label{prop:lengths}
Suppose that $m \ge 3$. 
\begin{enumerate}
\item[(i)] If there is an integer $t$ such that $1 \le t \le m-1$ and $\lan\theta_t,\xi_t \ran= 3$, then $t=m-2$. As a consequence, $m\ge 4$ and $\|\theta_1\|=\ldots = \|\theta_{m-2}\|=\|\xi_1\|=\ldots = \|\xi_{m-3}\|$. 
\item[(ii)] If there is no such $t$, then $\|\theta_1\|=\ldots = \|\theta_{m-1}\|=\|\xi_1\|=\ldots = \|\xi_{m-2}\|$. 
\end{enumerate}
\end{proposition}
\begin{proof}
We only give a proof for (i). 
Proof of (ii) follows from a similar argument. 
It follows from Proof of Proposition \ref{lem:coes} that $\lan\theta_1,\xi_1 \ran\in \{1,2\}$.
By Lemma \ref {rem:k=2}, we must have $\lan\theta_1,\xi_1 \ran=1$. 
Thus $\|\theta_{2}\|=\|\xi_1\|=\|\theta_{1}\|$.
The first statement of (i) follows from Lemma \ref {rem:k=2} and Corollary \ref{cor:G2}. 
One can use Lemma \ref{lem:inner-prod} to prove inductively that $\lan\theta_i,\xi_i \ran=1$ and $\theta_i,\xi_i$ all are long roots for $1 \le i \le m-3$ ($\theta_{m-2}$ is a long root as well), which proves the second statement.
\end{proof}

\vn
\emph{Convention}: 
For simplicity, in the remainder of the paper, let us call the case ``there is an integer $t$ such that $1 \le t \le m-1$ and $\lan\theta_t,\xi_t \ran= 3$" Case $1$, and its negation Case $2$.

Our candidate for a set whose cardinality is expressed in terms of $m_2$, and isomorphic (actually equal) to the set $\Lambda$ in Proposition \ref{lem:coes} will be introduced below.
For a finite multiset $S=\{(a_1)^{b_1},\ldots, (a_n)^{b_n}\}$, we write $\overline{S}$ for the base set of $S$ i.e., $\overline{S}=\{a_1,\ldots, a_n\}$.

\begin{proposition}\label{prop:b-a}\quad
\begin{enumerate}
\item[(i)] If Case 1 occurs, then $\Xi=\{\xi_0, \xi_1, \ldots, (\xi_{m-2})^2\}$ with $\xi_i \ne \xi_j$ for $0 \le i < j \le m-2$. 
As a result, $|\overline{\Xi}|=m_2-2$. 
Moreover, $(\xi_0,\xi_1,\ldots,\xi_{m-3})$ is a simple chain of $\widetilde{\D}(\Phi)$ connected to the other vertices only at $\xi_{m-3}$, and $\lan \xi_{m-3},\xi_{m-2}\ran=-3$. 
\item[(ii)] If Case 2 occurs, then  $\Xi=\{\xi_0, \xi_1, \ldots, \xi_{m-1}\}$ with $\xi_i \ne \xi_j$ for $0 \le i < j \le m-1$. 
As a result, $|\Xi|=m_2-1$. 
If $\lan\theta_1,\xi_1 \ran=2$, then $\Xi=\{\xi_1\}$ and $m=2$. 
If $\lan\theta_1,\xi_1 \ran=1$ and $m \ge 3$, then $(\xi_0,\xi_1,\ldots,\xi_{m-2})$ is a simple chain of $\widetilde{\D}(\Phi)$ connected to the other vertices only at $\xi_{m-2}$. 
\end{enumerate}
\end{proposition}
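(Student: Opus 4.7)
The plan is to establish the distinctness of the $\xi_i$'s and the simple chain structure in tandem, by induction on the index, using three ingredients: (a) the uniqueness $|\Theta^{(r)}|=1$ for $m_{\ell-1}<r\le m_\ell$ (so that $\theta_j$ is determined by its height in $\U$); (b) the length information of Proposition~\ref{prop:lengths}; and (c) the tree structure of $\widetilde{\D}(\Phi)$ together with the non-ramification statement of Corollary~\ref{cor:2}.

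First I would handle consecutive coincidences. If $\xi_{i-1}=\xi_i$ then $\theta_{i-1}-2\xi_{i-1}=\theta_{i+1}\in\Phi^+$ forces $\langle\theta_{i-1},\xi_{i-1}\rangle\ge 2$. Assuming the integer equals $2$, Lemma~\ref{rem:k=2} yields $\alpha'\in\Delta\setminus\{\xi_{i-1}\}$ with $\theta_i-\alpha'\in\Phi^+$; for $i\le m-1$ its height lies in $\U$, so ingredient (a) gives $\theta_i-\alpha'=\theta_{i+1}$, hence $\alpha'=\xi_i=\xi_{i-1}$, a contradiction. The integer is therefore $3$, and by Proposition~\ref{prop:lengths}(i) this can only occur in Case 1 at $i-1=m-2$. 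In Case 1, the matching equality $\xi_{m-1}=\xi_{m-2}$ is obtained by applying Corollary~\ref{cor:G2} to $\beta=\theta_{m-2}$ and $\alpha=\xi_{m-2}$ (noting $\mathrm{ht}(\theta_{m-2})=m_{\ell-1}+3\ge m_{\ell-1}+2$), together with $\theta_m=\theta_{m-1}-\xi_{m-1}\in\Phi^+$. Whenever $\xi_{i-1}\ne\xi_i$ in the allowed range, the same uniqueness argument shows $\theta_{i-1}-\xi_i\notin\Phi$, while $\theta_{i-1}+\xi_i$ is either not in $\Phi$ or equals $\theta_{i-2}$; in either case $(\theta_{i-1},\xi_i)\le 0$, and expanding $\theta_{i-1}=\theta_i+\xi_{i-1}$ yields $(\xi_{i-1},\xi_i)<0$, i.e.\ adjacency in $\widetilde{\D}(\Phi)$.

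With these preparations I would induct on $k$ to show that $\xi_0,\ldots,\xi_k$ are pairwise distinct, running up to $k=m-3$ in Case 1 and $k=m-2$ in Case 2. Since $\widetilde{\D}(\Phi)$ is a tree, distinct vertices with only consecutive adjacencies automatically form an induced simple chain, so that part of the claim comes for free. The inductive step is where the main obstacle lies: if $\xi_k=\xi_j$ for some $j\le k-2$, then $\xi_j=\xi_k$ would be adjacent to $\xi_{k-1}$ in addition to its chain-neighbours $\xi_{j-1},\xi_{j+1}$; Corollary~\ref{cor:2} forbids $\xi_j$ from having three neighbours, so $\xi_{k-1}\in\{\xi_{j-1},\xi_{j+1}\}$, and the induction hypothesis narrows this to $j=k-2$. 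The delicate case $\xi_k=\xi_{k-2}$ is ruled out using Proposition~\ref{prop:lengths}: in the induction range $\xi_{k-2},\xi_{k-1}$ are both long of the same length, hence $\langle\xi_{k-1},\xi_{k-2}\rangle=-1$, while the $\xi_{k-2}$-string through $\theta_{k-1}$ forces $\langle\theta_{k-1},\xi_{k-2}\rangle=-1$; expanding $(\theta_k,\xi_{k-2})$ via $\theta_k=\theta_{k-1}-\xi_{k-1}$ then gives $0$, contradicting $\theta_{k+1}=\theta_k-\xi_k\in\Phi^+$.

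The boundary cases close the argument. In Case 1, $\xi_{m-2}$ lies outside the length equality of Proposition~\ref{prop:lengths}(i) and is in fact short, hence distinct from the long roots $\xi_0,\ldots,\xi_{m-3}$ by length alone; its adjacency with $\xi_{m-3}$ combined with the $\sqrt{3}$ length ratio then forces $\langle\xi_{m-3},\xi_{m-2}\rangle=-3$. In Case 2 with $\langle\theta_1,\xi_1\rangle=1$, the distinctness of $\xi_{m-1}$ from the earlier terms is settled by the same non-ramification/length argument of the inductive step. For the subcase $\langle\theta_1,\xi_1\rangle=2$, if $m\ge 3$ then Proposition~\ref{prop:lengths}(ii) would place $\xi_1$ among the $\xi_j$'s having length $\|\theta_1\|$, contradicting $\|\theta_1\|=\sqrt{2}\,\|\xi_1\|$ imposed by $\langle\theta_1,\xi_1\rangle=2$; hence $m=2$. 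The assertion that the simple chain attaches to the rest of $\widetilde{\D}(\Phi)$ only at its final vertex is immediate from Corollary~\ref{cor:2}, which rules out ramification at each interior chain vertex.
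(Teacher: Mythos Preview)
Your argument is correct and uses the same underlying facts as the paper (Proposition~\ref{prop:lengths}, Corollary~\ref{cor:2}, and the height uniqueness in $\U$), but it is organized differently. The paper fixes a hypothetical repetition $\xi_i=\xi_j$ with \emph{minimal gap} $j-i$, disposes of gaps $1$ and $2$ by direct inequalities coming from Proposition~\ref{prop:lengths}, and for gap $\ge 3$ observes that the distinct consecutive edges $\{\xi_i,\xi_{i+1}\},\ldots,\{\xi_{j-1},\xi_j\}$ would close a cycle in the Dynkin tree. You instead run an induction along the chain, first establishing adjacency of consecutive $\xi$'s and then using Corollary~\ref{cor:2} to force any repetition down to the single case $\xi_k=\xi_{k-2}$, which you eliminate by a string computation. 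Your route makes the consecutive case and the identity $\xi_{m-1}=\xi_{m-2}$ in Case~1 more explicit (the paper leaves these essentially to the reader), at the cost of slightly more bookkeeping; the paper's minimal-gap/cycle argument is shorter once one accepts $\langle\theta_i,\xi_i\rangle=1$ for $i\le m-3$ from the proof of Proposition~\ref{prop:lengths}.

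One small point to tighten: when you conclude that the simple chain attaches to the rest of $\widetilde{\D}(\Phi)$ only at its final vertex, Corollary~\ref{cor:2} handles the \emph{interior} vertices, but the endpoint $\xi_0=-\theta$ is not covered by non-ramification alone (two neighbours would be allowed). You need the additional observation, available from Remark~\ref{rem:only1}, that $-\theta$ has exactly one neighbour in $\widetilde{\D}(\Phi)$ once $c_{\max}\ge 2$; and $c_{\max}\ge 2$ follows from $m\ge 2$ since otherwise $-\theta$ would have two neighbours and hence $|\Theta^{(m_\ell-1)}|\ge 2$.
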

\begin{proof}
We only give a proof for (i). 
Proof of (ii) follows from a similar argument. 
Obviously, $\xi_0\ne \xi_i$ for all $1 \le i \le m-2$ by a reason of heights, and $\xi_{m-2} \ne \xi_i$ for all $0 \le i \le m-3$ by a reason of lengths.
Suppose to the contrary that $\xi_i=\xi_j$ for some $1 \le i<j \le m-3$.
Choose indexes $i,j$ so that $j-i$ is minimal.
By Proposition \ref{prop:lengths}, $j>i+1$.
If $j=i+2$, then $\theta_i=\theta_{i+3}+2\xi_{i}+\xi_{i+1}$.
This cannot happen since $\lan \theta_i,\xi_{i}\ran=1$, $\lan\theta_{i+3},\xi_{i} \ran\ge -1$ and $\lan\xi_{i+1},\xi_{i} \ran\ge -1$.
Then $j>i+2$ and
$\{\xi_i ,\xi_{i+1}\}, \{\xi_{i+1},\xi_{i+2}\}, \ldots, \{\xi_{j-1},\xi_{j}\}$ are connected subgraphs of $\D(\Phi)$. 
By the choices of $i,j$, the simple roots $\xi_i ,\xi_{i+1},\ldots,\xi_{j-1}$ are mutually distinct, the condition $\xi_i=\xi_j$ implies that $\D(\Phi)$ contains a cycle.
This contradiction proves the first statement. 
The remaining statements follow immediately. 
\end{proof}

\begin{corollary}\label{cor:differences}
 \quad
\begin{enumerate}
\item[(i)] If Case 1 occurs, then $\theta_i-\theta_j \in \Phi^+$ for $1 \le i<j \le m$, $\{i,j\} \ne \{m-2,m\}$, and $\theta_{m-2}-\theta_m \in 2\Delta$.
\item[(ii)] If Case 2 occurs, then $\theta_i-\theta_j \in \Phi^+$ for $1 \le i<j \le m$.
\end{enumerate}
\end{corollary}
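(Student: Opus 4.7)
The plan is to use the telescoping identity $\theta_i-\theta_j=\xi_i+\xi_{i+1}+\cdots+\xi_{j-1}$ together with the $\widetilde{\D}(\Phi)$-structure of the $\xi_k$'s recorded in Proposition~\ref{prop:b-a}, with $\alpha$-string arguments bridging the simple-chain part and the distinguished simple root at its end.

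The excluded pair $(m-2,m)$ in Case (i) is immediate: $\xi_{m-1}=\xi_{m-2}$ gives $\theta_{m-2}-\theta_m=2\xi_{m-2}\in 2\Delta$. For the remaining pairs I would split into two groups. First, when the summands $\xi_i,\ldots,\xi_{j-1}$ stay inside the simple chain of equal-length simple roots (i.e.\ $j\le m-2$ in Case (i), or $j\le m-1$ in Case (ii)), they are pairwise distinct and form a connected subchain of $\D(\Phi)$, so $\theta_i-\theta_j$ is the highest root of the induced type-$A_{j-i}$ sub-root-system of $\Phi$, hence lies in $\Phi^+$. Second, when the sum reaches the ``anomalous'' simple root---$\xi_{m-2}$ in Case (i) or $\xi_{m-1}$ in Case (ii)---I would set $\beta$ to be the partial sum in the chain (in $\Phi^+$ by the first group) and invoke the relevant $\alpha$-string. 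In Case (i), $\beta:=\xi_i+\cdots+\xi_{m-3}$ satisfies $\lan\beta,\xi_{m-2}\ran=\lan\xi_{m-3},\xi_{m-2}\ran=-3$, because only $\xi_{m-3}$ among the summands is adjacent to $\xi_{m-2}$, so the $\xi_{m-2}$-string through $\beta$ has length $4$; this yields both $\beta+\xi_{m-2}=\theta_i-\theta_{m-1}$ and $\beta+2\xi_{m-2}=\theta_i-\theta_m$ in $\Phi^+$, leaving only the boundary pairs $(m-2,m-1)$ and $(m-1,m)$, which are single simple roots handled directly. In Case (ii), the analogous $\beta:=\xi_i+\cdots+\xi_{m-2}$ satisfies $\lan\beta,\xi_{m-1}\ran=\lan\xi_{m-2},\xi_{m-1}\ran<0$ (using that $\xi_{m-1}$ meets the chain only at $\xi_{m-2}$), so $\beta+\xi_{m-1}=\theta_i-\theta_m\in\Phi^+$.

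The main obstacle I anticipate is the adjacency $\lan\xi_{m-2},\xi_{m-1}\ran<0$ in Case (ii), which is not recorded explicitly in Proposition~\ref{prop:b-a}(ii). I expect to establish it via the $\xi_{m-1}$-string through $\theta_{m-2}$: since heights strictly above $m_{\ell-1}$ are each occupied by a unique root (Theorem~\ref{thm:dual}) and $\xi_{m-1}\ne\xi_{m-2}$, one checks that neither $\theta_{m-2}+\xi_{m-1}$ nor $\theta_{m-2}-\xi_{m-1}$ is a root, whence $\lan\theta_{m-2},\xi_{m-1}\ran=0$; combined with $\lan\theta_{m-1},\xi_{m-1}\ran\ge 1$ (since $\theta_{m-1}-\xi_{m-1}=\theta_m\in\Phi$) and the decomposition $\theta_{m-2}=\theta_{m-1}+\xi_{m-2}$, this forces $\lan\xi_{m-2},\xi_{m-1}\ran\le -1$, as required.
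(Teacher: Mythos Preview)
Your proof is correct and follows the same overall strategy as the paper: telescope $\theta_i-\theta_j=\xi_i+\cdots+\xi_{j-1}$ and exploit the Dynkin-graph structure of $\Xi$ from Proposition~\ref{prop:b-a}. The paper's execution differs slightly: it invokes \cite[Chapter VI, \S1.6, Corollary 3(b)]{B68} (a sum of pairwise distinct simple roots over a connected subgraph of $\D(\Phi)$ is a root) to cover all $j\le m-1$ in one stroke---including the passage across the triple edge---and then treats $j=m$ by directly computing $(\theta_i-\theta_{m-1},\xi_{m-2})=(\xi_{m-3}+\xi_{m-2},\xi_{m-2})<0$. You instead stay inside the type-$A$ chain for $j\le m-2$ and cross to $\xi_{m-2}$ via the $\xi_{m-2}$-string through $\beta=\xi_i+\cdots+\xi_{m-3}$, obtaining $j=m-1$ and $j=m$ simultaneously from $\langle\beta,\xi_{m-2}\rangle=-3$. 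Your route is a bit more elementary (no external citation), and your explicit derivation of $\langle\xi_{m-2},\xi_{m-1}\rangle<0$ in Case~(ii) supplies a step the paper leaves implicit under ``similar argument.''
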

\begin{proof}
We only give a proof for (i). 
Obviously, $\theta_{m-2}-\theta_m=2 \xi_{m-2} \in 2\Delta$.
By \cite[Chapter VI, \S1.6, Corollary 3(b)]{B68}, $\theta_i-\theta_{j} =\xi_i +\xi_{i+1}+\ldots+\xi_{j-1}\in \Phi^+$ for $1 \le i<j \le m-1$. 
Note that $\theta_i-\theta_m =(\theta_i-\theta_{m-1})+\xi_{m-2}$ for all $1 \le i \le m-3$. 
Thus $\theta_i-\theta_m\in\Phi^+$ because $\theta_i-\theta_{m-1}\in\Phi^+$ as above and
\begin{align*}
(\theta_i-\theta_{m-1},\xi_{m-2}) &= (\xi_i +\ldots+\xi_{m-3}+\xi_{m-2},\xi_{m-2}) \\
& =(\xi_{m-3}+\xi_{m-2},\xi_{m-2})<0.
\end{align*}
\end{proof}

\begin{remark}\label{rem:related}
Corollary \ref{cor:differences} is related to a property of a \emph{chain} in the poset in \cite[Lemma 5.1]{H16}. 
However, for the particular chain $\U$, Corollary \ref{cor:differences} is a bit more explicit and the proof does not need to go through the classification whether the root system is of type $G_2$ or not.
\end{remark}

\section{Proof of the main result}\label{sec:proof}
Theorem \ref{thm:uniform} is a consequence of the following:
 \begin{theorem}\label{thm:iso}
With the notations as in Propositions \ref{lem:coes} and \ref{prop:b-a}, $\overline{\Xi}=\Lambda$.
\end{theorem}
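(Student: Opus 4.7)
The plan is to show $\overline{\Xi}=\Lambda$ by induction on the chain, exploiting the tree structure of $\widetilde{\D}(\Phi)$ together with the parallel structural statements in Propositions \ref{lem:coes} and \ref{prop:b-a}. The degenerate case $c_{\max}=1$ is immediate: Remark \ref{rem:only1} forces $\Phi$ to be of type $A_\ell$, so $\Lambda=\{-\theta\}$; since the exponents of $A_\ell$ are $1,2,\ldots,\ell$ we get $m_2=2$ and $m=1$, giving $\overline{\Xi}=\{-\theta\}=\Lambda$. Henceforth assume $c_{\max}\ge 2$. By Remark \ref{rem:only1}, $\widetilde{\D}(\Phi)$ is a tree, $-\theta$ is a leaf, and its unique neighbor $\alpha^{*}\in\Delta$ must coincide with both $\xi_1$ and $\lambda_1$, because $(\xi_0,\xi_1)<0$ and $(\lambda_0,\lambda_1)<0$.

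Inductively, suppose $\xi_j=\lambda_j$ for $0\le j\le s$ and that $s$ lies in the ``non-branching'' portion of both chains---up to $\lambda_{q-1}$ by Proposition \ref{lem:coes}(ii), and up to $\xi_{m-3}$ in Case 1 or $\xi_{m-2}$ in Case 2 by Proposition \ref{prop:b-a}. Then $\xi_s=\lambda_s$ has degree at most $2$ in $\widetilde{\D}(\Phi)$, so in the tree its neighbor different from $\xi_{s-1}$ is uniquely determined, forcing $\xi_{s+1}=\lambda_{s+1}$. What remains is to show that the two chains have the same total length so that they end at the same vertex.

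For this length matching I would establish $c_{\xi_s}=s+1$ along the chain by replaying the inequality manipulations in the proof of Proposition \ref{lem:coes}. The inputs needed are $\lan\xi_1,\theta\ran=1$ (from the adjacency of $\xi_1$ to $-\theta$ combined with $\theta$ being long), $\lan\theta,\xi_s\ran=0$ for $s\ge 2$ (which follows because $\xi_s\ne\xi_1$ by Proposition \ref{prop:b-a}, while $\xi_1$ is the only simple root adjacent to $-\theta$), together with single edges along the simple-chain portion (by Proposition \ref{prop:lengths}). These yield the recurrence $c_{\xi_{s+1}}=2c_{\xi_s}-c_{\xi_{s-1}}$ with $c_{\xi_0}=1,\ c_{\xi_1}=2$, whence $c_{\xi_s}=s+1$.

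The main obstacle is the terminal step, which splits into three sub-configurations: the generic Case 2 with $\lan\theta_1,\xi_1\ran=1$; the exceptional Case 2 with $\lan\theta_1,\xi_1\ran=2$ (where $m=2$ and the terminal edge is double, so $c_{\xi_1}=2=c_{\max}$ is read off directly); and Case 1 with a triple edge between $\xi_{m-3}$ and $\xi_{m-2}$, which by Corollary \ref{cor:G2} and Proposition \ref{prop:lengths} forces $\Phi=G_2$ and reduces the matter to a finite check. In each sub-configuration a short direct computation paralleling the proof of Proposition \ref{lem:coes}, applied to $\overline{\Xi}$ in place of $\Lambda$, confirms $c_{\xi_{|\overline{\Xi}|-1}}=c_{\max}$. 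This equality, combined with the step-by-step matching above, yields $|\overline{\Xi}|=c_{\max}=|\Lambda|$ and therefore $\overline{\Xi}=\Lambda$.
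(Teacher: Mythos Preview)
Your tree-matching idea is sound and does half the work, but the proposal has a genuine gap at the terminal step.

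The recurrence you set up yields $c_{\xi_s}=s+1$ along $\overline{\Xi}$, hence $c_{\xi_{|\overline{\Xi}|-1}}=|\overline{\Xi}|\le c_{\max}=|\Lambda|$. What you never establish is the reverse inequality $|\Lambda|\le|\overline{\Xi}|$. The inductive matching $\xi_j=\lambda_j$ only works over the common index range; if the $\Lambda$-chain were strictly longer, say $q>m-1$, then $\lambda_{m-1}=\xi_{m-1}$ would sit in the interior of the simple chain $(\lambda_0,\ldots,\lambda_{q-1})$ of Proposition~\ref{lem:coes}(ii), and all of your equations remain perfectly consistent: $\langle\theta,\xi_{m-1}\rangle=0$ just gives $c_{\lambda_m}=m+1$, which is exactly what Proposition~\ref{lem:coes}(i) predicts. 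Nothing you wrote rules this out. The paper closes this gap by a separate argument (the claim \eqref{eq:claim}): one shows directly that the roots $\eta_p:=\theta-(\lambda_1+\cdots+\lambda_{p-1})$ are the \emph{unique} positive roots at their respective heights for $1\le p\le q+1$, via a careful application of Lemma~\ref{lem:3roots} and the non-ramification of $\lambda_1,\ldots,\lambda_{q-1}$. This forces $m_{\ell}-q\ge m_{\ell-1}+1$, i.e.\ $q+1\le m$, giving $\Lambda\subseteq\overline{\Xi}$. Your proposal contains no analogue of this step.

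Separately, your handling of Case~1 is circular. The assertion that a triple edge ``forces $\Phi=G_2$'' is precisely Corollary~\ref{cor:criterion} (equivalently Theorem~\ref{thm:criterion-G2}), whose proof in the paper relies on Theorem~\ref{thm:iso} itself; Corollary~\ref{cor:G2} and Proposition~\ref{prop:lengths} do not give this conclusion. The paper instead treats Case~1 uniformly by extending the recurrence one more step (using that $\xi_{m-3}$ is long so $\langle\xi_{m-2},\xi_{m-3}\rangle=-1$) to obtain $c_{\xi_{m-2}}=m-1$, and then combines this with the inclusion $\Lambda\subseteq\overline{\Xi}$ from the $\eta_p$ argument.
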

\begin{proof}
The proof will be proceeded in three steps. 
In what follows, $\theta$ and $\theta_1$ both denote the highest root.

Step $1$. If $c_{\max}=1$, by Remark \ref{rem:only1}, all roots of $\Phi$ have the same length.
So the problem falls in Case 2. 
It is easily seen that $\Xi=\Lambda=\{-\theta\}$, and $m_2=c_{\max}+1=2$. 
Note also that $c_{\max}=1$ if and only if $m=1$.

Step $2$. Now consider $c_{\max}\ge 2$ and $m=2$. 
This implies that $\xi_1 \equiv \lambda_1$ is the unique vertex of ${\D}(\Phi)$ connected to $-\theta$. 
By Proof of Proposition \ref{lem:coes}, $\lan\theta,\xi_1 \ran\in \{1,2\}$. 
So the problem falls in Case 2. 
Hence  $\Xi=\{-\theta, \xi_1\}$ and $m_2=m+1=3$. 
If $\lan\theta,\lambda_1 \ran=2$, by Proposition \ref{lem:coes}, $\Lambda=\{-\theta, \lambda_1\}$ and $c_{\max}=2$. 
Now consider $\lan\theta,\lambda_1 \ran=1$, and suppose that $|\Lambda| \ge 3$ i.e., $\Lambda$ contains a simple root other than $\lambda_1$, say $\lambda_2$.
Recall the notation $\Theta^{(r)}=\{\alpha \in \Phi^+ \mid {\rm ht}(\alpha)=r\}$. 
Since $m=2$, we may assume that $\Theta^{(m_{\ell-1})} \supseteq \{\mu:=\theta-\lambda_1-\lambda_2,\mu' :=\theta-\lambda_1-\lambda'_2\}$ with $\lambda'_2\ne\lambda_2$.
If $\lambda_1 =\lambda'_2$, then $\lan\mu', \lambda_1\ran =-3$, which is absurd because $\lambda_1$ is long.
If $\lambda_1 \ne\lambda'_2$, by Lemma \ref{lem:3roots},  $\lambda'_2+\lambda_1 \in \Phi^+$. 
Thus $\lambda_{1}$ is a ramification point of $\widetilde{\D}(\Phi)$, a contradiction. 
In either case, $\Xi=\Lambda=\{-\theta, \lambda_1\}$ and  $m_2=c_{\max}+1=3$. 

Step $3$.
Now consider $c_{\max}\ge 2$ and $m \ge 3$. 
The condition $m \ge 3$ ensures that $\lan\theta_1,\lambda_1\ran =1$.

Firstly, we prove that $\overline{\Xi}\supseteq\Lambda$. 
By Proposition \ref{lem:coes}(ii), we have
$$\lambda_1+ \ldots+ \lambda_i\in \Phi^+ \mbox{ and } (\theta_1,\lambda_1+ \ldots+ \lambda_i)=(\theta_1,\lambda_1)>0 \quad (1\le i \le q).$$
Set $\eta_1:=\theta_1$, and for $2 \le p \le q+1$ set 
$$\eta_p:=\theta_1 - (\lambda_1+ \ldots+ \lambda_{p-1}) \in \Phi^+, \mbox{ then } \eta_{p}=\eta_{p-1}-\lambda_{p-1}.$$
One can use Lemma \ref{lem:inner-prod} and the fact that $\lambda_1, \ldots, \lambda_{q-1}$ are long roots from Proposition \ref{lem:coes}(ii) to prove inductively that $\lan\eta_i, \lambda_i\ran =1$ for $1 \le i \le q-1$ and $\|\eta_1\|=\ldots = \|\eta_{q-1}\|= \|\eta_{q}\|$. 
We claim that 
\begin{equation}\label{eq:claim}
\Theta^{(m_l-p+1)}=\{\eta_p\} \mbox{ for } 1 \le p \le q+1.
\end{equation}
It is clearly true when $1 \le p\le2$.
Suppose to the contrary that we can choose the smallest $p$ such that $3 \le p \le q+1$ and $|\Theta^{(m_l-p+1)}|>1$. 
In particular, $m_{l-1}=m_l-p+1$.
To obtain a contradiction, we use a very similar argument to that used in Step $2$.
Assume that
$\{\eta_p,\eta'_p\} \subseteq \Theta^{(m_l-p+1)}$ with $\eta_p\ne\eta'_p$.
There exists $\lambda'_{p-1} \in \Delta$ such that $\lambda_{p-1}\ne\lambda'_{p-1}$ and 
$\eta_{p-2}=\eta'_p+\lambda'_{p-1}+\lambda_{p-2}$.
If $\lambda_{p-2} =\lambda'_{p-1}$, then $\lan\eta'_p,\lambda_{p-2}\ran =-3$, which is absurd since $\lambda_{p-2}$ is long.
If $\lambda_{p-2} \ne \lambda'_{p-1}$, by the minimality of $p$ and Lemma \ref{lem:3roots}, $\lambda'_{p-1}+\lambda_{p-2} \in \Phi^+$. 
If $p=3$, $\lambda_{1}$ is connected to three different roots: $-\theta_1, \lambda_{2}, \lambda'_{2}$, which is absurd. 
Suppose henceforth that $p \ge 4$.
Since $\lambda_{p-2}$ is connected only to $\lambda_{p-3}, \lambda_{p-1}$, we must have $\lambda_{p-3} =\lambda'_{p-1}$.
Thus $\eta_{p-3}=\eta'_p+2\lambda_{p-3}+\lambda_{p-2}$, and $\lan\eta'_p,\lambda_{p-3}\ran =-2$.
This is impossible since $\lambda_{p-3}$ is long. 
We complete the proof of the claim \eqref{eq:claim}. 
Therefore, 
$$\Lambda=\{\lambda_{p-1} \mid 2 \le p \le q+1\} =\{\eta_{p-1}-\eta_{p} \mid 2 \le p \le q+1\} \subseteq \overline{\Xi}.$$

Secondly, we prove that $\overline{\Xi}\subseteq\Lambda$. 
The proofs for Case 1 and Case 2 are similar, we only give a proof for Case 1. 
For the occurrence of Case 1, we can assume that $m \ge 4$.
We need to prove that starting from $-\theta$, going along vertices of ${\D}(\Phi)$, the elements of $\overline{\Xi}$ produce a correct path to reach a first simple root associated to $c_{\max}$.
To this end, we show that $c_{\xi_0} < c_{\xi_1} < \ldots < c_{\xi_{m-2}}$. 
Note that $\lan\xi_{m-2},\xi_{m-3}\ran=-1$ since $\xi_{m-3}$ is a long root. 
Using Proposition \ref{prop:b-a} and working out the equations $\lan\theta_1,\theta_1\ran=2, \lan\theta_1,\xi_1\ran=1, \lan\theta_1,\xi_i\ran=0$ for $2 \le i \le m-3$, we obtain
$$c_{\xi_0}=1, \, c_{\xi_1}=2, \, c_{\xi_{i+1}}= 2c_{\xi_i} -c_{\xi_{i-1}} \quad (1 \le i \le m-3).$$
Thus $c_{\xi_i}=i+ 1$ for $0 \le i \le m-2$, which proves the claim.
\end{proof}

Corollary \ref{cor:criterion}  is a consequence of the following:
  \begin{theorem}\label{thm:criterion-G2} 
The following statements are equivalent: 
\begin{enumerate}[(i)]
\item The Dynkin graph of $\Phi$ has the form 
\begin{figure}[htp!]
\begin{center}
\begin{tikzpicture}[scale=1]
    \node[dnode,label=below:$\xi_{1}$] (1) at (0,0) {};
    \node[dnode,label=below:$\xi_{2}$] (2) at (1,0) {};
    \path (1) edge[tedge] (2);
\end{tikzpicture}
\end{center}
\end{figure}

\item The extended Dynkin graph of $\Phi$ has the form
\begin{figure}[htp!]
\begin{center}
\begin{tikzpicture}[scale=1]
    \node[dnode,label=below:$-\theta$] (0) at (-1,0) {};
    \node[dnode,label=below:$\xi_{1}$] (1) at (0,0) {};
    \node[dnode,label=below:$\xi_{2}$] (2) at (1,0) {};
    \path (0) edge  (1) ;
    \path (1) edge[tedge] (2);
\end{tikzpicture}
\end{center}
\end{figure}

\item $c_{\max}=m_2-2$.
\end{enumerate}

\end{theorem}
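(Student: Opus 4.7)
The plan is to establish the cycle $(i) \Rightarrow (ii) \Rightarrow (iii) \Rightarrow (i)$, with $(iii) \Rightarrow (i)$ being the substantive direction; I will build on Theorem \ref{thm:uniform}, Proposition \ref{prop:b-a}(i), Corollary \ref{cor:2}, and Corollary \ref{cor:G2}.

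For $(i) \Rightarrow (ii)$, I would note that a $G_2$ Dynkin graph has a triple edge, so $\Phi$ has two root lengths and $c_{\max} \ge 2$ by Remark \ref{rem:only1}; Proposition \ref{lem:coes}(ii) then forces $-\theta$ to attach by a single edge to the unique long simple root (the one having the same length as $\theta$), which is $\xi_1$ in the labeling of (ii). For $(ii) \Rightarrow (iii)$, I would read off the Cartan integers from the extended Dynkin graph, obtaining $\lan\theta, \xi_1\ran = 1$ and $\lan\theta, \xi_2\ran = 0$; solving yields $\theta = 2\xi_1 + 3\xi_2$, hence $c_{\max} = 3$. Theorem \ref{exponents} then gives $\mathrm{ht}(\theta) = 5$ and, with $\ell = 2$, $m_2 = 5$, so $c_{\max} = m_2 - 2$.

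For $(iii) \Rightarrow (i)$, Theorem \ref{thm:uniform} places us in Case 1, and Proposition \ref{prop:b-a}(i) tells us that $\widetilde{\D}(\Phi)$ contains the simple chain $\xi_0 - \xi_1 - \cdots - \xi_{m-3}$ terminated by the triple edge $\xi_{m-3} \equiv \xi_{m-2}$. By Corollary \ref{cor:2}, $\xi_{m-2}$ has degree at most two in $\widetilde{\D}(\Phi)$, so at most one additional neighbor $\mu \ne \xi_{m-3}$ is conceivable. The key step is to rule such $\mu$ out: using $\mathrm{ht}(\theta_{m-2}) = m_{\ell-1} + 3 \ge m_{\ell-1} + 2$, I would invoke Corollary \ref{cor:G2} with $\beta = \theta_{m-2}$, $\alpha = \xi_{m-2}$ to obtain $\lan\theta_{m-1}, \mu\ran \le 0$; expanding $\theta_{m-2} = \theta_{m-1} + \xi_{m-2}$ then gives $\lan\theta_{m-2}, \mu\ran \le \lan\xi_{m-2}, \mu\ran < 0$, so $\theta_{m-2} + \mu$ is a positive root of height $m_{\ell-1} + 4 \le m_\ell$ (valid because $m \ge 4$ in Case 1, by Proposition \ref{prop:lengths}(i)). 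Uniqueness of the positive root at this height forces $\theta_{m-2} + \mu = \theta_{m-3}$, hence $\mu = \xi_{m-3}$, a contradiction.

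Once $\mu$ is excluded, $\widetilde{\D}(\Phi)$ equals the chain of $m - 1$ vertices just described, so $\ell = m - 2$; Proposition \ref{lem:coes}(i) together with Theorem \ref{thm:iso} then gives $\theta = \sum_{s=1}^{m-2}(s+1)\xi_s$. A direct computation yields $\lan\theta, \xi_{m-2}\ran = -3(m-2) + 2(m-1) = 4 - m$, and the requirement $\lan\theta, \xi_{m-2}\ran \ge 0$ (since $\theta$ is the highest root) forces $m \le 4$. Combined with $m \ge 4$, this gives $m = 4$, $\ell = 2$, and the Dynkin graph $\xi_1 \equiv \xi_2$, proving (i). The main obstacle is the elimination of the phantom neighbor $\mu$ via Corollary \ref{cor:G2}; once the graph structure is pinned down, the bound $m \le 4$ is straightforward arithmetic.
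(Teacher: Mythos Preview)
Your argument is correct, but for the substantive direction $(iii)\Rightarrow(i)$ it follows a genuinely different route from the paper's. The paper works directly with the equation $\lan\theta,\xi_{m-2}\ran=0$: writing this out using $c_{\xi_{m-2}}=c_{\max}$, $c_{\xi_{m-3}}=c_{\max}-1$, and $\lan\xi_{m-3},\xi_{m-2}\ran=-3$, any further neighbours of $\xi_{m-2}$ contribute non-positively, so one obtains $0\le -c_{\max}+3$; hence $c_{\max}=3$, $m_2=5$, and equality forces $\xi_{m-2}$ to have no extra neighbour. You instead first eliminate a hypothetical neighbour $\mu$ of $\xi_{m-2}$ by a root-poset argument via Corollary~\ref{cor:G2}, then pin down the entire extended Dynkin graph, and only afterwards compute $\lan\theta,\xi_{m-2}\ran=4-m$ to get $m\le 4$. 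Both approaches work; the paper's is shorter because the inequality $c_{\max}\le 3$ and the exclusion of the extra neighbour come out of the same computation, whereas yours handles these two facts separately. On the other hand, your $(ii)\Rightarrow(iii)$ is slightly more direct than the paper's, since with $\ell=2$ you get $m_2=m_\ell=\mathrm{h}-1$ immediately from Theorem~\ref{exponents} without having to detect Case~1.

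Two small points worth tightening in your write-up. First, the appeal to Corollary~\ref{cor:2} for $\xi_{m-2}$ is not actually what closes the argument (your contradiction works for every $\mu\ne\xi_{m-3}$, regardless of how many there are); what you do need Corollary~\ref{cor:2} for is $\xi_{m-3}$, to ensure that once $\xi_{m-2}$ is a leaf the graph really terminates and $\widetilde{\D}(\Phi)$ cannot branch at $\xi_{m-3}$. Second, when you invoke ``Proposition~\ref{lem:coes}(i) together with Theorem~\ref{thm:iso}'' to get $\theta=\sum_{s=1}^{m-2}(s+1)\xi_s$, you are implicitly using that the labellings $\lambda_s$ and $\xi_s$ coincide, which follows because both describe the unique path in $\widetilde{\D}(\Phi)$ starting at $-\theta$ (cf.\ the proof of Theorem~\ref{thm:iso}, Step~3).
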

\begin{proof}
(i) $\Leftrightarrow$ (ii) is clear. 
(ii) $\Rightarrow$ (iii) 
It is easy to calculate from the graph that $c_{\max}=c_{\xi_2}=3$. 
By Theorem \ref{thm:uniform}, $m_2 \ge 4$. 
Note that $\theta_2 := \theta- \xi_1$ is the unique root of height $m_\ell -1$. 
Moreover, $\lan \theta_2, \xi_2\ran=-\lan \xi_1,\xi_2\ran=3$. 
So the problem falls in Case 1. 
Thus $c_{\max}=m_2-2$.
(ii) $\Leftarrow$ (iii) 
The condition $c_{\max}=m_2-2$ ensures that $c_{\max}\ge 2$ i.e., $m_2 \ge 4$. 
Theorem \ref{thm:uniform} implies that the problem falls in Case 1. 
In particular, $m_2 \ge 5$ by Proposition \ref{prop:lengths}(i). 
Moreover, by Proposition \ref{lem:coes} and Theorem \ref{thm:iso}, the equations $\lan\theta, \xi_{m-2}\ran=0$, $c_{\xi_{m-2}}=c_{\max}$, $c_{\xi_{m-3}}=c_{\max}-1$ yield $c_{\max} \le 3$.
So we must have $c_{\max}= 3$ i.e., $m_2=5$, and there are no vertices of ${\D}(\Phi)$ connected to $\xi_2$ other than $\xi_1$. 
This completes the proof.

 \end{proof}

\section*{Acknowledgements}
This paper originates from the author's Master's thesis, 
written under the supervision of Professor Hiroaki Terao at the Hokkaido University. 
The author wishes to express his sincere thanks to Professor Terao for many stimulating conversations.
The author also gratefully acknowledges the support of  Master's scholarship program of the Japanese Ministry of Education, Culture, Sports, Science, and Technology (MEXT) under grant number 142506.

\bibliographystyle{alpha} 
\bibliography{references}

\end{document}